\DeclareMathOperator\Hom{Hom}
\DeclareMathOperator\kera{ker}
\DeclareMathOperator\cokera{coker}
\DeclareMathOperator\diagMat{diag}
\DeclareMathOperator\Mat{Mat}
\DeclareMathOperator\GL{GL}
\DeclareMathOperator\Otn{O}
\DeclareMathOperator\Id{Id}
\newcommand\Q{\mathbb{Q}}
\DeclareMathOperator\Sym{Sym}
\DeclareMathOperator\K{K}
\DeclareMathOperator\GW{GW}
\DeclareMathOperator\W{W}
\newcommand\sym{\mathrm{s}}
\newcommand\qud{\mathrm{q}}
\newcommand\m{\mathfrak{m}}
\newcommand\N{\mathbb{N}}
\newcommand\Z{\mathbb{Z}}
\newcommand\F{\mathbb{F}}
\newcommand\perm{\mathfrak{S}}
\newcommand\ff[1]{\F_{#1}}
\newcommand\cyc[1]{{\Z/{#1}\Z}}
\DeclarePairedDelimiter\smb{\langle}{\rangle}
\DeclarePairedDelimiter\pfister{\langle\!\langle}{\rangle\!\rangle}
\DeclarePairedDelimiterX\scal[2]{\langle}{\rangle}{#1, #2}
\DeclarePairedDelimiterX\comprehension[2]{\{}{\}}{\; #1 \;\delimsize\vert\; #2 \;}
\title[\textbf{A presentation of the Grothendieck--Witt group over \texorpdfstring{$\ff{2}$}{F_2}}]{\textbf{A presentation of the symmetric Grothendieck--Witt group of local rings over \texorpdfstring{$\ff{2}$}{F_2}}}
\author{Marcus Nicolas}
\begin{document}

  \begin{abstract}
    Let $R$ be a commutative local ring.
    We provide an explicit presentation of the symmetric Grothendieck--Witt
    ring $\GW^\sym(R)$ of $R$ as an abelian group when $R$
    has residue field $\ff{2}$.
    This completes \cite{RS24}, where an explicit presentation of $\GW^\sym(R)$
    is given when the residue field is different from $\ff{2}$.
    We then use this result to compute the symmetric Grothendieck--Witt rings
    for the sequences of local rings $\cyc{2^n}$ and $\ff{2}[x] / (x^n)$.
  \end{abstract}

  \maketitle

  \tableofcontents

  \section*{Introduction}

The study of symmetric and quadratic bilinear forms over commutative
rings occupies a central role in algebra, geometry and number theory.
However, their direct classification is often highly intricate,
which motivates the introduction of more computable invariants such as the
Grothendieck--Witt $\GW(R)$ and Witt rings $\W(R)$ attached to a commutative
ring $R$, strongly related to its algebraic $\K$-theory $\K(R)$.

The symmetric Witt ring $\W^\sym(k)$ of a field $k$ admits a simple
presentation \cite{MH73}*{lemma IV.1.1} : it is additively generated
by symbols $\smb{a}$ for $a \in k^\times / k^{\times 2}$ and relations
\begin{enumerate}
  \item for $a$ unit :
  \[
    \smb{a} + \smb{-a} = 0
  \]

  \item for $a$ and $b$ two units such that $a+b$ is again a unit :
  \[
    \smb{a} + \smb{b} = \smb{a+b} + \smb{a b (a+b)}
  \]
\end{enumerate}
In their recent paper \cite{RS24}, Rogers and Schlichting compute the
symmetric Grothendieck--Witt ring of a local ring $R$ of residue field
not $\F_2$, and obtain as a corollary that the same presentation holds
in this case for $\W^\sym(R)$.

However, observe that this presentation is not correct in general when
$R$ has residue field $\ff{2}$. Indeed, since in this case the sum of two
units cannot be a unit itself, the second family of relations is always empty.
Recall that the ring $\W^\sym(\Z_2)$ is contained
in $\W^\sym(\Q_2)$ \cite{MH73}*{corollary IV.3.3},
and is thus finite \cite{Lam05}*{theorem VI.2.29}.
Since the only units of $\cyc{4}$ are $1$ and $3$, the above presentation would
in this case imply $\W^\sym(\cyc{4}) \otimes \Q \iso \Q$,
which is not an algebra over $\W^\sym(\Z_2) \otimes \Q \iso 0$.
This presentation is also incomplete for $\F_2[x] / (x^4)$,
see \cite{RS24}*{proposition 4.1}.

The main goal of this paper is to give explicit presentations of
$\GW^\sym(R)$ and $\W^\sym(R)$ for a commutative local ring $R$ whose
residue field is $\F_2$.

Our main tool will be the following general and rather inexplicit presentation
established in \cite{KRW72} :
if $R$ is a commutative semi-local connected
ring $R$, then its symmetric Grothendieck--Witt ring $\GW^\sym(R)$
is additively generated by symbols $\smb{a}$ for
$a \in R^\times / R^{\times 2} $, subject to the relations
\[
  \smb{a_1} + \cdots + \smb{a_4} = \smb{b_1} + \cdots + \smb{b_4}
\]
for $a_1, \dots, b_4$ such that $(a_1) \perp \cdots \perp (a_4) \iso
(b_1) \perp \cdots \perp (b_4)$.
The exact statement is recalled below as \cref{thm[1].presentation}.
Here $(x)$ for a unit $x$ denotes the module $R$ endowed with the unimodular
symmetric bilinear form $u \otimes v \mapsto x u v$, and $\perp$ is the
orthogonal sum.

\vspace{\baselineskip}

The main result is the following :

\begin{thm*}[stated at \ref{thm[3].presentation}]
  Let $R$ be a commutative local ring with residue field $\ff{2}$.
  Then $\GW^\sym(R)$ is generated as an abelian group by
  symbols $\smb{a}$ for $a \in R^\times / R^{\times 2}$ and relations :
  \begin{enumerate}[label=(\arabic*)]
    \item\label{item[0].even_relation} for units $a$, $b$
    and $m$, $n$ non-invertible such that $ma + nb = 0$ :
    \[
      \smb{a} + \smb{b} = \smb{a + n^2 b} + \smb{b + m^2 a}
    \]

    \item\label{item[0].odd_relation} for units $a$, $b$, $c$ and $d$ :
    \[
      \smb{a} + \smb{b} + \smb{c} + \smb{d} =
        \smb{u} + \smb{v} + \smb{w} + \smb{a b c d \cdot u v w}
    \]
    where $u \defeq a + \frac{1}{c} + \frac{1}{d}$, $v \defeq \frac{1}{a} +
    \frac{1}{b} + d$ and $w \defeq a + b + a^2 c$.
  \end{enumerate}
\end{thm*}

\vspace{\baselineskip}

In the last section, we show how this result can be used to compute
the symmetric Grothendieck--Witt groups of the rings
$\cyc{2^n}$ and $\ff{2}[x] / (x^n)$ for $n \geq 2$,
see \cref{prop[4].calcul_cyc4_cyc8,cor[4].tower_cyc2n,cor[4].calcul_trunc}.
More precisely, we obtain explicit group isomorphisms
\[
  \GW^\sym(\cyc{2^n}) \iso
    \begin{cases} 
      \Z \oplus \cyc{4} & \text{if } n = 2 \\
      \Z \oplus \cyc{4} \oplus \cyc{2} & \text{if } n \geq 3
    \end{cases}
\]
and
\[
  \GW^\sym\!\big(\ff{2}[x] / (x^n) \big) \iso
    \begin{cases} 
      \Z \oplus (\cyc{2})^k & \text{if } n = 2k \\
      \Z \oplus (\cyc{2})^k & \text{if } n = 2k+1
    \end{cases}
\]
We furthermore prove that the canonical comparison maps
\[
  \GW^\sym(\Z_2)
    \rightarrow \lim_n \GW^\sym(\cyc{2^n})
  \quad\text{and}\quad
  \GW^\sym(\ff{2}\llbracket x \rrbracket)
    \rightarrow \lim_n \GW^\sym\!\big(\ff{2}[x] / (x^n) \big)
\]
are isomorphisms, see \cref{cor[4].tower_cyc2n,cor[4].comparison_trunc}.
In all of these cases, we give an explicit description
of the ring structure on $\GW^\sym$.

  \section{Setting the stage}

We first recall quickly the definition of the symmetric Grothendieck--Witt
and Witt rings associated to a commutative ring $R$.

\begin{dfn}
  Let $R$ be a commutative ring.
  \begin{enumerate}[label=(\arabic*)]
    \item A \nterm{symmetric $R$-module} is a pair $(M, \phi)$ where $M$ is
    a $R$-module and $\phi \in \Hom_{R}(M \otimes_R M, R)^{\cyc{2}}$
    is a symmetric form on $M$. We say that $(M, \phi)$ is \nterm{unimodular} if
    $\phi$ is. A morphism between symmetric $R$-modules is an $R$-linear
    morphism intertwining the symmetric forms.

    \item Given a unimodular symmetric $R$-module $(M, \phi)$,
    a \nterm{lagrangian} is an inclusion $(L, 0) \subseteq (M, \phi)$ such
    that the induced map $M / L \rightarrow L^\vee$ is an isomorphism.
    A symmetric module that has a lagrangian is said to be \nterm{metabolic}.

    \item Given two symmetric $R$-modules $(M, \phi)$ and $(N, \psi)$,
    their \nterm{orthogonal sum} is the symmetric $R$-module
    $(M, \phi) \perp (N, \psi) \defiso (M \oplus N, \phi \perp \psi)$.
  \end{enumerate}
\end{dfn}

\begin{discussion}
  Given a commutative ring $R$, the orthogonal sum evidently equips the
  set $\Sym(R)$ of isomorphism classes of finitely generated projective
  symmetric unimodular $R$-modules with the structure of a commutative monoid.
\end{discussion}

\begin{dfn}
  If $R$ is a commutative ring, define its \nterm{symmetric Grothendieck--Witt
  ring} $\GW^\sym(R)$ as the group completion of the commutative monoid
  $\Sym(R)$. Observe that the tensor product of symmetric modules promotes
  $\GW^\sym(R)$ to a commutative ring.

  The \nterm{symmetric Witt ring} $\W^\sym(R)$ of $R$ is then defined as the
  quotient of $\GW^\sym(R)$ by the ideal generated by metabolic forms under
  the canonical map $\Sym(R) \rightarrow \GW^\sym(R)$.
\end{dfn}

\begin{cstr}
  Let $R$ be a commutative ring.
  The kernel of the group morphism
  $R^\times \rightarrow \Sym(R)^\times$
  contains $R^{\times 2}$, and the composite
  \[
    R^\times / R^{\times 2}
      \rightarrow \Sym(R)^\times
      \rightarrow \GW^\sym(R)^\times
  \]
  thus induces a canonical ring morphism
  \[
    \Z[R^\times / R^{\times 2}] \rightarrow \GW^\sym(R)
  \]
  by the adjunction $\Z[-] \dashv (-)^\times$.

  We denote $a \mapsto \smb{a}$ the canonical projection
  $R^\times \rightarrow R^\times / R^{\times 2}$.
\end{cstr}

When $R$ is local, or more generally semi-local and connected, its
symmetric Grothendieck--Witt group is in fact completely
determined by the behaviour of symmetric forms on $R^4$ :

\begin{thm}[\cite{KRW72}*{theorem 1.16}]\label{thm[1].presentation}
  If $R$ is a commutative semi-local connected ring, then the ring morphism
  \[
    \Z[R^\times / R^{\times 2}] \rightarrow \GW^\sym(R)
  \]
  is surjective, and its kernel is additively generated by the relations
  \[
    \smb{a_1} + \cdots + \smb{a_4} = \smb{b_1} + \cdots + \smb{b_4}
  \]
  for units $a_1, \dots, b_4$ such that $(a_1) \perp \cdots \perp (a_4) \iso
  (b_1) \perp \cdots \perp (b_4)$.
\end{thm}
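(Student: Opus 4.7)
My plan is to handle surjectivity and kernel identification in turn, with the latter split into two reductions: first to isomorphism relations between diagonal forms of the same rank, then to relations involving only four diagonal summands at a time.

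For surjectivity I would establish a \emph{stable diagonalization} statement: for every unimodular symmetric form $(M, \phi)$ over $R$ there is a diagonal form $D$ such that $(M, \phi) \perp D$ is isomorphic to a diagonal form $\smb{u_1} \perp \cdots \perp \smb{u_N}$. Since $\GW^\sym(R)$ is the group completion of $\Sym(R)$, this suffices to write $[(M,\phi)] = \sum \smb{u_i} - [D]$ as a $\Z$-combination of symbols. To prove stable diagonalization I would use that finitely generated projective modules over a connected semi-local ring are free, and proceed by induction on the rank of $M$. The main step is to produce, possibly after orthogonal sum with one copy of $\smb{1}$, a vector $v \in M$ of unit norm, from which $\smb{\phi(v,v)}$ splits off as an orthogonal direct summand. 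Over a local $R$ this is easy: if every $\phi(v,v)$ lay in the maximal ideal, then $\phi(v,v) + 1$ would be a unit for all $v$, so $(v, 1)$ would have unit norm in $M \perp \smb{1}$. For general semi-local $R$ the same idea is applied one maximal ideal at a time, combined with a Chinese-remainder argument to glue into a single vector of norm unit modulo every maximal ideal.

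For the kernel, any element of $\ker\bigl(\Z[R^\times / R^{\times 2}] \to \GW^\sym(R)\bigr)$ corresponds, by the Grothendieck group construction, to two orthogonal sums of rank-$1$ forms that become isomorphic after adding a common symmetric module $T$. Applying stable diagonalization to $T$ and absorbing the resulting symbols into both sides, I reduce to an actual isometry $\smb{a_1} \perp \cdots \perp \smb{a_n} \iso \smb{b_1} \perp \cdots \perp \smb{b_n}$ between diagonal forms of the same rank. This reduction is compatible with the proposed $4$-summand relations modulo trivial rearrangements, so the theorem reduces to showing that every such isometry is generated in the kernel by rank-$4$ relations.

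The remaining and most delicate step is to show that any isometry of diagonal forms can be decomposed into a sequence of \emph{elementary} isometries each modifying only four summands at a time, so that each such step contributes exactly a rank-$4$ relation in the kernel. I expect this to be the main obstacle. The classical Witt approach, in which the orthogonal group of a non-degenerate form is generated by reflections acting on one or two coordinates, fails in general since reflections require $2$ to be a unit. Instead I would proceed by an explicit matrix manipulation: given an isometry $\sigma$, use rank-$4$ moves to successively reduce the number of coordinates on which $\sigma$ acts non-trivially, peeling off at each stage a matched pair $\smb{a_i} \iso \smb{b_j}$ until $\sigma$ becomes a mere permutation of the diagonal entries. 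The reason one must work in dimension four, and not two, is that a rank-$2$ isometry between diagonal forms over a ring with residue field $\ff{2}$ need not be realizable by any simple reflection, so two auxiliary coordinates are needed to provide the flexibility required for a cancellation-style argument.
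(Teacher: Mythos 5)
The paper does not supply its own proof of this theorem: it is quoted from Knebusch--Rosenberg--Ware \cite{KRW72}*{theorem 1.16} and used as a black box, so there is no internal argument to compare your attempt against. On its own terms, your outline follows the general shape of the argument in \cite{KRW72}: stable diagonalization of unimodular symmetric forms for surjectivity, reduction of the kernel (via the Grothendieck-group identity and absorption of a common stabilizing summand) to genuine isometries between diagonal forms of equal rank, and then a chain-equivalence step showing that every such isometry factors through moves supported in at most four coordinates. The first two pieces are reasoned correctly; in particular the observation that over a local ring either $\phi$ has a unit-valued vector or $\phi \perp \smb{1}$ does is the right entry point for stable diagonalization, though the induction deserves one more line to confirm that, after splitting off a unit summand from $\phi \perp \smb{1}$, the orthogonal complement again admits unit-valued vectors so the process terminates without further stabilization.

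The genuine gap is the third step, which you yourself flag as ``the main obstacle.'' This is precisely where the substance of the theorem lies, and the proposal does not prove it. Asserting that one should ``peel off at each stage a matched pair via rank-$4$ moves'' restates the desired conclusion rather than giving an argument for it; in particular nothing in the sketch explains why four coordinates suffice rather than three or five, nor why the peeling terminates in a bounded way once $\sigma$ fails to be a permutation. Establishing this requires a precise chain-equivalence lemma for diagonal symmetric forms over semi-local rings together with a Witt-type cancellation theorem valid when $2 \in \m$, and that is the technical core of \cite{KRW72}. As written, the heart of the theorem is left open.
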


\begin{rmk}\label{rmk[1].base_change_otn}
  Let $(R, \m)$ be a local ring, and $a_1, \dots, b_4$ units.
  The statement
  \[(a_1) \perp \cdots \perp (a_4) \iso (b_1) \perp \cdots \perp (b_4)\]
  is equivalent to the existence of $P \in \GL_4(R)$ such that
  \[
    P \diagMat(a_1, \dots, a_4) \, P^\top = \diagMat(b_1, \dots, b_4)
  \]
  In particular if $R / \m \iso \ff{2}$, then
  $P P^\top \equiv \Id_4 \pmod{\m}$
  and $P$ lies over an \emph{orthogonal} matrix of $\GL_4(\ff{2})$.
\end{rmk}

We conclude this section with a description of the orthogonal groups of $\ff{2}$
in low dimensions, motivated by the previous remark.

\begin{dfn}
  For $n \geq 1$ and $R$ a commutative ring, define
  the \emph{orthogonal group} of $R^n$ by
  \[
    \Otn_n(R) \defeq \comprehension{P \in \GL_n(R)}{PP^\top = \Id_n}
  \]
\end{dfn}

\begin{prop}\label{prop[1].otn_cyc2}
  The first orthogonal groups of $\ff{2}$ are given by
  \[
    \Otn_2(\ff{2}) \iso \perm_2
    \text{, }\quad
    \Otn_3(\ff{2}) \iso \perm_3
    \quad\text{and}\quad
    \Otn_4(\ff{2}) \iso \perm_4 \times \cyc{2}
  \]
  where in the last isomorphism $\perm_4$ and $1 \in \cyc{2}$ are respectively
  identified with the subgroup of permutation matrices and with the matrix
  \[
    \Phi \defeq
      \begin{pmatrix}
        0 & 1 & 1 & 1 \\
        1 & 0 & 1 & 1 \\
        1 & 1 & 0 & 1 \\
        1 & 1 & 1 & 0
      \end{pmatrix}
  \]
  of order $2$.
\end{prop}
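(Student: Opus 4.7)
The plan is to translate the condition $PP^\top = \Id_n$ into the combinatorial statement that the rows of $P$ form an orthonormal family over $\ff{2}$: each row has odd Hamming weight, and any two distinct rows share an even number of $1$'s. The three cases $n = 2, 3, 4$ then reduce to elementary case analyses on the possible row weights.

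For $n = 2$, each row has odd weight at most $2$, hence exactly $1$, and the distinctness of the rows forces $P \in \perm_2$. For $n = 3$, the possible row weights are $1$ or $3$; a weight-$3$ row is the all-ones vector, but then its dot product with any other row $r$ equals the weight of $r$, which is odd by assumption, contradicting orthogonality. So all rows have weight $1$ and $P \in \perm_3$.

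The main case is $n = 4$, where I would case-split on the multiset of row weights, each in $\{1, 3\}$. If all four rows have weight $1$, then $P$ is a permutation matrix. If all four have weight $3$, then the rows are exactly the four distinct vectors of weight $3$ in $\ff{2}^4$, and $P$ is a row-permutation of $\Phi$. The mixed cases are each ruled out by the observation that a weight-$3$ row orthogonal to $e_i$ must vanish in position $i$: for example, two standard basis rows $e_1, e_2$ force any remaining weight-$3$ row to be supported on coordinates $3, 4$, which is impossible, and the asymmetric configurations $(3,1)$ and $(1,3)$ are disposed of similarly.

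Finally, I would verify that $\Phi^2 = \Id_4$, which follows from $\Phi = J_4 - \Id_4$ and $J_4^2 = 4 J_4 \equiv 0 \pmod{2}$, and that $\Phi$ commutes with every permutation matrix $\sigma$, since in both $\sigma \Phi$ and $\Phi \sigma$ the $i$-th row is the complement of $e_{\sigma(i)}$. Combined with the case analysis above, which already shows $\Otn_4(\ff{2}) = \perm_4 \sqcup \perm_4 \cdot \Phi$, this yields the direct product decomposition $\Otn_4(\ff{2}) \iso \perm_4 \times \cyc{2}$. The step requiring the most care is simply being systematic about the mixed-weight configurations for $n = 4$; everything else is straightforward.
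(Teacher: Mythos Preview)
Your proposal is correct and follows essentially the same approach as the paper: both arguments reduce orthogonality to parity conditions on row weights and intersections, handle $n=2,3$ identically, and for $n=4$ split on whether a weight-$1$ row occurs. The only organisational difference is that the paper, upon finding a weight-$1$ row, block-decomposes and invokes the $n=3$ case, whereas you exhaust the mixed-weight configurations directly; your computation of $\Phi^2$ and the commutation via $\Phi = J_4 - \Id_4$ is a clean addition that the paper leaves as ``a simple computation''.
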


\begin{proof}
  Let $P \in \Otn_n(\ff{2})$ with $2 \leq n \leq 4$, and $\scal{\cdot}{\cdot}$
  be the symmetric bilinear form defined on $\ff{2}^n$ as
  \[
    \scal{u}{v} \defeq \sum_{i=1}^4 u_i v_i
  \]
  for $u = (u_1, \dots, u_n)$ and $v = (v_1, \dots, v_n)$.
  Every row and every column of $P$ is unitary for
  $\scal{\cdot}{\cdot}$, and thus has an odd number of non-zero coefficients.

  In the case $n=2$, every row and every column of $P$ must have exactly
  one non-zero coefficient, and thus $\Otn_2(\ff{2})$ is reduced to its
  subgroup of permutation matrices.

  If $n=3$, observe that $(1, 1, 1)^\perp$ is null. Every row and every column
  of $P$ must therefore have exactly one non-zero coefficient, and thus $P$ is
  a permutation matrix.
  
  If $n=4$, suppose that there exists $i$ such that the $i$-th row of $P$ has
  exactly one non-zero coefficient.
  Up to multiplying $P$ with permutation matrices, we
  can assume that the first row of $P$ is the vector $(1, 0, 0, 0)$.
  Because this row is orthogonal to the other ones with respect to
  $\scal{\cdot}{\cdot}$, it follows that the first column of $P$ must also
  contain three zeros. In other words we can write
  \[
    P = 
      \begin{pmatrix}
        1 & 0 \\
        0 & Q
      \end{pmatrix}
  \]
  with $Q \in \Otn_3(\ff{2}) \iso \perm_3$, and $P$ is therefore a permutation
  matrix. The same argument applies if there exists $j$ such that the $j$-th
  column of $P$ has only one non-zero coefficient.

  If not, then every row and every column of $P$ has exactly three non-zero
  coefficients. In other words, $P$ is the product of a permutation matrix
  with $\Phi$.

  Therefore $\Otn_4(\ff{2})$ is generated by permutation matrices and $\Phi$.
  A simple computation shows that $\Phi$ commutes with any permutation matrix,
  thus $\Otn_4(\ff{2})$ is isomorphic to the aforementioned product.
\end{proof}

\vspace{\baselineskip}

Now that all the notations are in place, we can explain our strategy.

\subsection*{Overview of the proof}

We apply \cref{thm[1].presentation} to compute $\GW^\sym(R)$
when $(R, \m)$ is a commutative local ring of residue field
$R / \m \iso \ff{2}$.

First, observe that for units $a_1, \dots, b_4$ of $R$, the condition
$(a_1) \perp \cdots \perp (a_4) \iso (b_1) \perp \cdots \perp (b_4)$
can be rephrased as the existence of $P \in \GL_4(R)$ such that
\[
  P \diagMat(a_1, \dots, a_4) \, P^\top = \diagMat(b_1, \dots, b_4)
\]
Because $\ff{2}^\times$ is trivial, this means that $P$ lies over an
orthogonal matrix of $\ff{2}$. But $\Otn_4(\ff{2})$ only contains
permutation matrices and the special matrix $\Phi$ acccording to
\cref{prop[1].otn_cyc2}.

Now let $a_1, \dots, b_4$ be units in $R$ such that
$P \diagMat(a_1, \dots, a_4) \, P^\top = \diagMat(b_1, \dots, b_4)$
for some fixed invertible matrix $P$.
\begin{itemize}
  \setlength\itemsep{.6em}
  \item If $P \equiv \Id_4 \pmod{\m}$, then \cref{thm[2].even} proves that
  the relation
  \[
    \smb{a_1} + \cdots + \smb{a_4} = \smb{b_1} + \cdots + \smb{b_4}
  \]
  in $\Z[R^\times / R^{\times 2}]$ is in the subgroup generated by relations
  from \ref{item[0].even_relation}.

  \item If $P \equiv \Phi \pmod{\m}$, there exists by the
  \cref{lem[3].odd_relation} an invertible matrix $Q \equiv \Phi \pmod{\m}$
  such that
  \[
    Q \diagMat(a_1, \dots, a_4) \, Q^\top = \diagMat(c_1, \dots, c_4)
  \]
  where the relation
  $\smb{a_1} + \cdots + \smb{a_4} = \smb{c_1} + \cdots + \smb{c_4}$ is
  in \ref{item[0].odd_relation}.
  But now $P Q^{-1} \equiv \Id_4 \pmod{\m}$
  and
  \[
    (P Q^{-1}) \diagMat(c_1, \dots, c_4) \, (P Q^{-1})^\top =
      \diagMat(b_1, \dots, b_4)
  \]
  which implies that the relation
  $\smb{c_1} + \cdots + \smb{c_4} = \smb{b_1} + \cdots + \smb{b_4}$
  is in the subgroup generated by \ref{item[0].even_relation} according to
  the first case.
\end{itemize}
Finally the relation
$\smb{a_1} + \cdots + \smb{a_4} = \smb{b_1} + \cdots + \smb{b_4}$
is always contained in the subgroup of $\Z[R^\times / R^{\times 2}]$ generated
by \ref{item[0].even_relation} and \ref{item[0].odd_relation}.

  \section{Congruence over diagonal matrices}

\begin{dfn}
  Let $(R, \m)$ be a commutative local ring, $n \geq 1$ an integer and
  $D = \diagMat(d_1, \dots, d_n)$ a diagonal matrix in $\GL_n(R)$.
  A matrix $P \in \GL_n(R)$ is said to be \nterm{good} relatively to $D$ if
  \begin{enumerate}
    \item $P$ lies over a diagonal matrix in the residue field $R / \m$
    \item off-diagonal coefficients of the matrix, except perhaps two, are zero
    \item the matrix $P D P^\top$ is diagonal.
  \end{enumerate}
\end{dfn}

\begin{discussion}\label{par[2].even}
  Let $(R, \m)$, $n$ and $D$ be as above.
  An explicit computation shows that a matrix $P = (p_{ij})$ is good
  relatively to $D$ if and only if
  \begin{enumerate}
    \item the invertible coefficients of $P$ are exactly the diagonal ones
    \item there exists $1 \leq k < \ell \leq n$ such that
    \begin{enumerate}
      \item all coefficients of $P$ are zero except for the diagonal and maybe
      $p_{k\ell}$ and $p_{\ell k}$
      \item $p_{\ell k} p_{kk} d_k + p_{k\ell} p_{\ell\ell} d_\ell = 0$
    \end{enumerate}
  \end{enumerate}
  Moreover in this case, $P D P^\top = \diagMat(d'_1, \dots , d'_n)$ with
  \[
    d'_i =
      \begin{cases} 
        p_{ii}^2 d_i & \text{if } i \neq k, \ell \\
        p_{kk}^2 d_k + p_{k\ell}^2 d_\ell & \text{if } i=k \\
        p_{\ell\ell}^2 d_\ell + p_{\ell k}^2 d_k & \text{if } i=\ell
      \end{cases}
  \]
  In particular, the congruence $D \sim PDP^\top$ induces the relation
  \[
    \sum_{i=1}^n \smb{d_i} =
      \sum_{i \neq k, \ell} \smb{d_i}
        + \smb[\Big]{d_k + \Big(\frac{p_{k\ell}}{p_{kk}}\Big)^2 d_\ell}
        + \smb[\Big]{d_\ell + \Big(\frac{p_{\ell k}}{p_{\ell\ell}}\Big)^2 d_k}
  \]
  in the symmetric Grothendieck--Witt ring $\GW^\sym(R)$, or equivalently
  \[
    \smb{d_k} + \smb{d_\ell} =
      \smb[\Big]{d_k + \Big(\frac{p_{k\ell}}{p_{kk}}\Big)^2 d_\ell} +
      \smb[\Big]{d_\ell + \Big(\frac{p_{\ell k}}{p_{\ell\ell}}\Big)^2 d_k}
  \]
  Let $I$ be the subgroup of $\Z[R^\times / R^{\times 2}]$ generated by
  the relations 
  \[
    \smb{a} + \smb{b} = \smb{a + n^2 b} + \smb{b + m^2 a}
  \]
  for $a$, $b$ units of $R$, and $m, n \in \m$ such that
  $m a + n b = 0$. In fact $I$ is evidently an ideal.
  The above shows that $I$ is contained in the kernel of the quotient map
  $\Z[R^\times / R^{\times 2}] \twoheadrightarrow \GW^\sym(R)$.

  If $R / \m \iso \ff{2}$, then $I$
  coincides with the subgroup generated by the relations
  \[
    \smb{a} + \smb{b} = \smb{c} + \smb{d}
  \]
  for $(a) \perp (b) \iso (c) \perp (d)$.
  Indeed, $\Otn_2(\ff{2}) \iso \perm_2$ by \cref{prop[1].otn_cyc2}.
\end{discussion}

\vspace{\baselineskip}

We come to the main result of this section :

\begin{thm}\label{thm[2].even}
  Let $(R, \m)$ be a commutative local ring and $n \geq 1$.
  If $D, P \in \GL_n(R)$ are such that
  \begin{enumerate}
    \item $D$ and $PDP^\top$ are diagonal
    \item $P$ lies over a diagonal matrix in the residue field $R / \m$
  \end{enumerate}
  then it is possible to write $P$ as a product
  \[
    P = P_k \cdots P_1
  \]
  such that for $0 \leq i < k$, the matrix $P_{i+1}$ is good with respect to
  $P_i \cdots P_1 D P_1^\top \cdots P_i^\top$.
\end{thm}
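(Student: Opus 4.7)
The plan is to proceed by induction on the dimension $n$. For $n = 1$ the matrix $P$ is trivially good, and for $n = 2$ the hypotheses combined with the explicit description in \ref{par[2].even} already force $P$ to be good, so one may take $k = 1$ and $P_1 = P$.

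For the inductive step, I construct, iteratively for $j = 2, \ldots, n$, a good matrix $G_j$ supported on rows and columns $\{1, j\}$ (and identity elsewhere) such that right-multiplication by $G_j^{-1}$ clears the entry in position $(1, j)$ of the partially modified matrix $P_{j-1} \defeq P G_2^{-1} \cdots G_{j-1}^{-1}$. Each $G_j$ is built from the recipe of \ref{par[2].even}: prescribe the $(1, j)$ off-diagonal entry to match the current $(1, j)$-coefficient of $P_{j-1}$, choose the $(1, 1)$-diagonal entry to be the current $(1, 1)$-coefficient (and the other diagonal entries to be $1$), and then solve for the $(j, 1)$ off-diagonal entry using the defining equation $g_{j,1} g_{1,1} d_1 + g_{1,j} g_{j,j} d_j = 0$. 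A direct computation confirms that $G_j$ is invertible, that its off-diagonal entries lie in $\m$, and that $P_{j-1} G_j^{-1}$ still lies over a diagonal matrix with unit diagonal entries.

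After all $n - 1$ such steps, the matrix $P' \defeq P G_2^{-1} \cdots G_n^{-1}$ has first row of the form $(u, 0, \ldots, 0)$ with $u \in R^\times$. Writing $D_n$ for the resulting congruence of $D$, the constraint that $P' D_n (P')^\top$ be diagonal, combined with the shape of the first row, forces $p'_{i, 1} = 0$ for all $i \neq 1$, since the $(1, i)$-entry of $P' D_n (P')^\top$ reduces to $u \cdot (D_n)_{1,1} \cdot p'_{i, 1}$. Hence $P'$ is block diagonal and factors as
\[
  P' = \diagMat(u, 1, \ldots, 1) \cdot
    \begin{pmatrix} 1 & 0 \\ 0 & P'' \end{pmatrix}
\]
where $\Lambda \defeq \diagMat(u, 1, \ldots, 1)$ is itself trivially good (being diagonal), and $P'' \in \GL_{n-1}(R)$ satisfies the hypotheses of the theorem in dimension $n - 1$ with respect to the lower-right $(n-1) \times (n-1)$ block of $D_n$. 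The induction hypothesis applied to $P''$ yields a factorisation into good matrices for the $(n-1)$-dimensional problem, and each such factor extends to dimension $n$ by placing a $1$ in position $(1, 1)$; concatenating these extensions with $\Lambda$ on the outside and $G_n, \ldots, G_2$ on the inside yields the required decomposition $P = P_k \cdots P_1$.

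The hardest part should not be any individual computation but rather the careful bookkeeping: checking at each step that $G_j$ is genuinely good with respect to the evolving congruence of $D$, that intermediate products continue to lie over diagonal matrices with units on the diagonal, and that the extensions of the inductive factors still satisfy the goodness condition with respect to the congruences produced by the preceding $G_n, \ldots, G_2$. All of these verifications reduce to direct computations using the explicit description of good matrices recorded in \ref{par[2].even}.
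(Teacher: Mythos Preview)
Your approach is correct and essentially identical to the paper's: both clear one boundary row of $P$ entry by entry via right-multiplication by good matrices (the paper peels off the last row using an inner induction, you peel off the first via the loop $j=2,\dots,n$), deduce that the corresponding column vanishes from the diagonality of $PDP^\top$, and recurse on the remaining $(n-1)\times(n-1)$ block. The only slip is that the goodness equation for $G_j$ must use the $(1,1)$-entry of the evolved diagonal $G_{j-1}\cdots G_2\, D\, G_2^\top \cdots G_{j-1}^\top$ rather than the original $d_1$, but you already flag exactly this under ``bookkeeping with the evolving congruence of $D$''.
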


\begin{proof}
  The proof is by induction on $n$ :
  \begin{itemize}
    \item the case $n=1$ is clear
    \item suppose that the result holds for $n \geq 1$, and let $D, P \in
    \GL_{n+1}(R)$ as in the statement. Write
    \[
      D = \diagMat(d_1, \dots, d_{n+1})
      \quad\text{and}\quad
      P = 
        \begin{pmatrix}
          P' & U \\
          V^\top & x
        \end{pmatrix}
    \]
    with $P' \in \Mat_{n}(R)$, $U, V \in R^n$ and $x \in R$.
    Observe that $P' \in \GL_n(R)$ and lies over a diagonal matrix in $R / \m$,
    that $x$ is invertible, and that $U, V \equiv 0 \pmod{\m}$.
    From the fact that $PDP^\top$ is diagonal we get the relation:
    \[
      P'D'V + x d_{n+1} U = 0
    \]
    where $D' \defeq \diagMat(d_1, \dots, d_n)$.

    If $V = (v_1, \dots, v_n)$, let $0 \leq k \leq n$ be the smallest integer
    such that $v_i = 0$ for $i > k$. We proceed by induction on $k$ :
    \begin{itemize}
      \item if $k = 0$, then $V = 0$. Therefore $U = 0$ and it suffices
      to apply the outer induction hypothesis to the matrices $D', P' \in \GL_n(R)$
      \item if $k \geq 1$, observe that the matrix
      \[
        E \defeq
          \begin{pmatrix}
            \Id_n & s e_k \\
            v_k e_k^\top & x
          \end{pmatrix}
      \]
      is good with respect to $D$, where $s \defeq -(v_k d_k)/(x d_{n+1})$ and
      $(e_1, \dots, e_n)$ denotes the canonical basis of $R^n$.
      If
      \[
        Q \defeq
          \begin{pmatrix}
            Q' & \frac{1}{x}(U - s Q' e_k) \\
            (V - v_k e_k)^\top & 1
          \end{pmatrix}
      \]
      with $Q' \defeq (xP' - v_k U e_k^\top) (x\Id_n - s v_k e_k e_k^\top)^{-1}$, then
      $P = QE$ and $Q \equiv P \pmod{\m}$. But now
      \[V - v_k e_k = (v_1, \dots, v_{k-1}, 0, \dots, 0)\]
      and we can thus apply the inner induction hypothesis to the couple
      $(Q, EDE^\top)$. \qedhere
    \end{itemize}
  \end{itemize}
\end{proof}

  \section{The presentation}

\begin{lem}\label{lem[3].odd_relation}
  Let $(R, \m)$ be a commutative local ring with residue field
  $R / \m \iso \ff{2}$.
  For any diagonal matrix $D = \diagMat(a, b, c, d)$ in $\GL_4(R)$,
  there exists $P \in \GL_4(R)$ such that
  \begin{enumerate}
    \item $P \equiv \Phi \pmod{\m}$
    \item $PDP^\top = \diagMat(t, u, v, w)$
  \end{enumerate}
  with
  \begin{align*}
    \smb{t} &= \smb{a b c d \cdot u v w} \\
    \smb{u} &= \smb[\Big]{\,a + \frac{1}{c} + \frac{1}{d}\,} \\
    \smb{v} &= \smb[\Big]{\,\frac{1}{a} + \frac{1}{b} + d\,} \\
    \smb{w} &= \smb[\big]{\,a + b + a^2 c\,}
  \end{align*}
  in $R^\times / R^{\times 2}$. 
\end{lem}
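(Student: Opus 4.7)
The plan is to construct $P$ row-by-row, exploiting the fact that each of the target entries omits one of the four units: $u = a + 1/c + 1/d$ omits $b$, $v = 1/a + 1/b + d$ omits $c$, and $w = a + b + a^2c$ omits $d$. Since the $i$-th row of $\Phi$ has its unique zero in column $i$, taking rows $2$, $3$, $4$ of $P$ to be lifts of the corresponding rows of $\Phi$ will automatically remove the $d_i$-contribution from $(PDP^\top)_{ii}$, so by choosing the non-zero entries cleverly one can force these diagonal entries to fall into the classes $\smb{u}, \smb{v}, \smb{w}$. Row $1$ then produces $t$, whose class will be forced by the determinant of $PDP^\top$.

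Concretely, I would try the ansatz
\[
  v_2 = (1, 0, 1/c, 1/d), \quad v_3 = (-1/a, 1/b, 0, 1), \quad v_4 = (-1/a, -1/a, 1, 0).
\]
A direct check (using $-1 \equiv 1 \pmod{\m}$) confirms they are congruent to the corresponding rows of $\Phi$ modulo $\m$, that $v_2 D v_2^\top = u$, $v_3 D v_3^\top = v$, and $v_4 D v_4^\top = w/a^2$ (whose class is $\smb{w}$), and that $v_i D v_j^\top = 0$ for distinct $i, j \in \{2, 3, 4\}$. These choices are symmetric enough that the mutual orthogonality reduces to trivial cancellations.

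The first row $v_1 \equiv (0, 1, 1, 1) \pmod{\m}$ is then determined, up to one free parameter, by the three linear equations $v_1 D v_i^\top = 0$ for $i = 2, 3, 4$. After normalizing the free parameter, I expect to find coefficients expressible in terms of $u, v, w$ via the identities $c + d + acd = cd \cdot u$ and $a + b + abd = ab \cdot v$ — specifically, shapes of the form $p_{11} = (bd - ac)/w$, $p_{12} = -acd\,u/w$, $p_{13} = -ab\,v/w$, with the congruence $v_1 \equiv (0, 1, 1, 1) \pmod{\m}$ following from $bd \equiv ac \pmod{\m}$. Invertibility of $P$ is automatic since $P \equiv \Phi \pmod{\m}$ and $\det \Phi$ is a unit in $\ff{2}$.

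Finally, the class of $t$ comes for free: comparing determinants in $PDP^\top = \diagMat(t, u, v, w)$ yields $tuvw = (\det P)^2 \cdot abcd$, hence $\smb{t} = \smb{abcd \cdot uvw}$ in $R^\times / R^{\times 2}$. I expect the main obstacle to be combinatorial rather than algebraic, namely identifying lifts of the three rows of $\Phi$ whose Gram products against $D$ vanish off-diagonal without any adjustment, so that the linear system for $v_1$ remains tractable and its solution manifestly has the required shape modulo $\m$.
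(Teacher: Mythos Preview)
Your plan is correct and follows the same overall strategy as the paper: exhibit an explicit $P \equiv \Phi \pmod{\m}$ with $PDP^\top$ diagonal, and then read off $\smb{t}$ from the determinant identity $\det(PDP^\top) = (\det P)^2 \det D$. Your ansatz for rows $2$, $3$, $4$ checks out exactly as you say, and solving the linear system for $v_1$ with the normalisation $p_{14}=1$ gives precisely your predicted entries $p_{11}=(bd-ac)/w$, $p_{12}=-acd\,u/w$, $p_{13}=-ab\,v/w$, via the two identities you flagged; the congruence $p_{11}\equiv 0\pmod{\m}$ then follows from $bd\equiv ac$.

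The difference from the paper is purely in presentation. The paper writes down a single $4\times 4$ matrix depending on three auxiliary unit parameters $p,q,r$, asks the reader to verify by computer that $PDP^\top$ is diagonal with entries of the stated shape, and then specialises $p=1/a$, $q=1/b$, $r=1$. Your row-by-row construction is more transparent: the mutual $D$-orthogonality of $v_2,v_3,v_4$ is visible by one-line cancellations, and the system for $v_1$ is small enough to solve by hand. What the paper's parametrised family buys is a bit of flexibility (other specialisations would yield variant relations), but for the lemma as stated your construction is both shorter and self-contained.
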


\begin{proof}
  For $p$, $q$ and $r$ three units in $R$, let
  \[
    P^\top \defeq 
      \begin{pmatrix}
        \frac{1}{r^2}\frac{a c}{b d} - r^2 &
          -\frac{r d}{p a} &
          p &
          -\frac{q r}{p} \frac{b}{a d} \\

        -\frac{p}{q r^2}\frac{a^2 c}{b^2 d} - \frac{1}{p q} \frac{a c}{b^2} - \frac{q r^2}{p} &
        0 &
        q &
        r\frac{1}{d} \\

        -\frac{p^2}{q r} \frac{a^2}{b d} - \frac{qa}{rd} - \frac{ra}{qb} &
        \frac{q r^2}{p} \frac{b d}{a c} &
        0 &
        -\frac{1}{p d} \\

        p r \frac{a}{d} + \frac{1}{p r} \frac{a c}{b d} + \frac{q^2 r}{p} \frac{b}{d} &
        1 &
        r &
        0
      \end{pmatrix}
  \]
  Evidently $P \equiv \Phi \pmod{\m}$, in particular $P$ is invertible.
  We leave the reader to check, hopefully with a computer, that
  $PDP^\top = \diagMat(t, u, v, w)$ with
  \begin{align*}
    t &= \frac{p^2}{q^2 r^4}\frac{a^3 c}{b^3 d} u v w \\
    u &= \frac{r^2 d^2}{p^2 a^2} \Big( a + \frac{p^2}{r^2} \frac{a^2}{d} + q^2 r^2 \frac{b^2}{c} \Big) \\
    v &= p^2 a + q^2 b + r^2 d \\
    w &= \frac{r^2}{p^2 a^2 d^2} \Big( p^2 a^2 b + \frac{1}{r^2} a^2 c + q^2 a b^2 \Big)
  \end{align*}
  Therefore:
  \begin{align*}
    \smb{t} &= \smb{a b c d \cdot u v w}\\
    \smb{u} &= \smb[\Big]{\,a + \frac{p^2}{r^2} \frac{a^2}{d} + q^2 r^2 \frac{b^2}{c}\,} \\
    \smb{v} &= \smb[\big]{\,p^2 a + q^2 b + r^2 d\,} \\
    \smb{w} &= \smb[\Big]{\,p^2 a^2 b + \frac{1}{r^2} a^2 c + q^2 a b^2\,}
  \end{align*}
  The choice $p = 1/a$, $q = 1/b$, $r = 1$ thus concludes the proof.
  Observe that the equality $\smb{t} = \smb{a b c d \cdot u v w}$ was in fact
  automatic since $\smb{\,\det{D}\,} = \smb{\,\det{PDP^\top}\,}$.
\end{proof}

\vspace{\baselineskip}

We can finally prove our main result :

\begin{thm}\label{thm[3].presentation}
  Let $R$ be a commutative local ring with residue field $\ff{2}$.
  Then the kernel of the surjective ring map
  \[
    \Z[R^\times / R^{\times 2}] \twoheadrightarrow \GW^\sym(R)
  \]
  is additively generated by the following relations :
  \begin{enumerate}[label=(\arabic*)]
    \item\label{item[3].even_relation} for units $a$, $b$
    and $m$, $n$ non-invertible such that $ma + nb = 0$ :
    \[
      \smb{a} + \smb{b} = \smb{a + n^2 b} + \smb{b + m^2 a}
    \]

    \item\label{item[3].odd_relation} for units $a$, $b$, $c$ and $d$ :
    \[
      \smb{a} + \smb{b} + \smb{c} + \smb{d} =
        \smb{u} + \smb{v} + \smb{w} + \smb{a b c d \cdot u v w}
    \]
    where $u \defeq a + \frac{1}{c} + \frac{1}{d}$, $v \defeq \frac{1}{a} +
    \frac{1}{b} + d$ and $w \defeq a + b + a^2 c$.
  \end{enumerate}
\end{thm}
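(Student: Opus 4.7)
The plan is to invoke \cref{thm[1].presentation}, which reduces the task to showing that every generating relation of the form $\smb{a_1} + \cdots + \smb{a_4} = \smb{b_1} + \cdots + \smb{b_4}$ lies in the subgroup of $\Z[R^\times / R^{\times 2}]$ generated by \ref{item[3].even_relation} and \ref{item[3].odd_relation}. Such a relation is witnessed by some $P \in \GL_4(R)$ with $P \diagMat(a_1, \ldots, a_4) P^\top = \diagMat(b_1, \ldots, b_4)$, and by \cref{rmk[1].base_change_otn} the reduction $\bar{P}$ lies in $\Otn_4(\ff{2})$. By \cref{prop[1].otn_cyc2} this group is generated by the permutation matrices and the special matrix $\Phi$. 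Since both sides of the target relation are symmetric in the $a_i$ and in the $b_i$, left- or right-multiplying $P$ by a permutation matrix only reorders the diagonals of $D \defeq \diagMat(a_1,\ldots,a_4)$ or of $PDP^\top$, and does not alter the relation to be proved. Hence I may reduce to the two cases $\bar{P} = \Id_4$ and $\bar{P} = \Phi$.

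For $\bar{P} = \Id_4$, I would apply \cref{thm[2].even} to factor $P = P_k \cdots P_1$, each $P_{i+1}$ being good with respect to the intermediate diagonal $P_i \cdots P_1 D P_1^\top \cdots P_i^\top$. By the computation in \cref{par[2].even} each such factorization step contributes a relation exactly of the form \ref{item[3].even_relation}: the goodness identity $p_{\ell k} p_{kk} d_k + p_{k\ell} p_{\ell\ell} d_\ell = 0$ reads $m d_k + n d_\ell = 0$ with $m \defeq p_{\ell k}/p_{\ell\ell}$ and $n \defeq p_{k\ell}/p_{kk}$ both in $\m$, and the new diagonal entries in positions $k$ and $\ell$ are $d_k + n^2 d_\ell$ and $d_\ell + m^2 d_k$. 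Summing these elementary relations over the intermediate diagonals telescopes to the full relation between $D$ and $PDP^\top$.

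For $\bar{P} = \Phi$, I would first apply \cref{lem[3].odd_relation} to produce a matrix $Q \equiv \Phi \pmod{\m}$ with $QDQ^\top = \diagMat(t,u,v,w)$, which by construction is exactly a single instance of \ref{item[3].odd_relation}. Then $PQ^{-1} \equiv \Id_4 \pmod{\m}$ and $(PQ^{-1}) \diagMat(t,u,v,w) (PQ^{-1})^\top = \diagMat(b_1, \ldots, b_4)$, so the previous case expresses the residual congruence $\smb{t}+\smb{u}+\smb{v}+\smb{w} = \smb{b_1}+\cdots+\smb{b_4}$ as a sum of relations of type \ref{item[3].even_relation}. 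Concatenating the two steps presents the original relation as one odd relation plus a sum of even relations.

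The technical heart of the argument —the decomposition into good matrices and the explicit formula realising a $\Phi$-congruence— has already been carried out in \cref{thm[2].even} and \cref{lem[3].odd_relation}, so no serious obstacle is expected in assembling the proof. The only point deserving attention will be to verify that the relations extracted from the good factors genuinely satisfy the non-invertibility hypothesis of \ref{item[3].even_relation}, which however follows immediately from the characterisation of good matrices in \cref{par[2].even}, whose off-diagonal entries lie in $\m$.
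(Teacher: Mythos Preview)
Your proposal is correct and follows essentially the same route as the paper's proof: reduce via \cref{thm[1].presentation} and \cref{prop[1].otn_cyc2} to the two cases $\bar P = \Id_4$ and $\bar P = \Phi$, handle the first with \cref{thm[2].even} and \cref{par[2].even}, and reduce the second to the first via \cref{lem[3].odd_relation}. The only point you leave implicit is the easy containment, namely that relations \ref{item[3].even_relation} and \ref{item[3].odd_relation} actually lie in the kernel of $\Z[R^\times / R^{\times 2}] \twoheadrightarrow \GW^\sym(R)$; the paper states this explicitly at the outset, citing \cref{par[2].even} and \cref{lem[3].odd_relation}, and you should do the same.
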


\begin{proof}
  The discussion at \ref{par[2].even} and \cref{lem[3].odd_relation}
  respectively show that relations \ref{item[3].even_relation} and
  \ref{item[3].odd_relation} are statisfied in the symmetric
  Grothendieck--Witt ring $\GW^\sym(R)$.
  The kernel of the quotient map
  $\Z[R^\times / R^{\times 2}] \twoheadrightarrow \GW^\sym(R)$
  thus contains the subgroup $I$ additively generated by those relations.

  We now turn to the converse inclusion. 
  According to \cref{thm[1].presentation},
  it suffices to prove that $I$ contains relations
  \[
    \smb{a_1} + \cdots + \smb{a_4} = \smb{b_1} + \cdots + \smb{b_4}
  \]
  for units $a_1, \dots, b_4$ of $R$ and $P \in \GL_4(R)$
  such that $P \diagMat(a_1, \dots, a_4)\, P^\top = \diagMat(b_1, \dots, b_4)$.

  Choose such $a_1, \dots, b_4$ and $P$, and set
  \[
    D \defeq \diagMat(a_1, \dots, a_4)
    \quad\text{and}\quad
    D' \defeq \diagMat(b_1, \dots, b_4)
  \]
  so that $P D P^\top = D'$.
  As was remarked in \ref{rmk[1].base_change_otn}, the matrix $P$ must lie over
  an orthogonal matrix of $\ff{2}$, and according to
  \cref{prop[1].otn_cyc2} there exists a permutation
  $\sigma \in \perm_4$ such that
  \[
    P \sigma \equiv \Id_4 \text{ or } \Phi \pmod{\m}
  \]
  where $\m$ denotes the maximal ideal of $R$ and identifying
  permutations with permutation matrices.
  Observe that
  $\sigma D \sigma^\top = \diagMat(a_{\sigma(1)}, \dots, a_{\sigma(4)})$,
  but
  \[
    \smb{a_1} + \cdots + \smb{a_4} =
      \smb{a_{\sigma(1)}} + \cdots + \smb{a_{\sigma(4)}}
  \]
  in $\Z[R^\times / R^{\times 2}]$, and thus also in the quotient group
  $\Z[R^\times / R^{\times 2}] / I$,
  because addition is commutative.
  We can therefore assume that $\sigma = 1$. Then :
  \begin{itemize}
    \setlength\itemsep{.6em}
    \item if $P \equiv \Id_4 \pmod{\m}$, \cref{thm[2].even} applies and
    the relation
    \[
      \smb{a_1} + \cdots + \smb{a_4} = \smb{b_1} + \cdots + \smb{b_4}
    \]
    is contained in the subgroup generated by
    \ref{item[3].even_relation} according to \ref{par[2].even}.

    \item if $P \equiv \Phi \pmod{\m}$, there exists
    by \cref{lem[3].odd_relation} an invertible $Q \equiv \Phi \pmod{\m}$
    such that
    \[
      QDQ^\top = \diagMat(c_1, \dots, c_4)
    \]
    and such that the relation
    $\smb{a_1} + \cdots + \smb{a_4} = \smb{c_1} + \cdots + \smb{c_4}$
    is in \ref{item[3].odd_relation}.
    But now observe that $P Q^{-1} \equiv \Id_4 \pmod{\m}$ and that
    \[
      (PQ^{-1}) \diagMat(c_1, \dots, c_4)\, (PQ^{-1})^\top = D'
    \]
    By the first case, this implies that the relation
    $\smb{c_1} + \cdots + \smb{c_4} = \smb{b_1} + \cdots + \smb{b_4}$
    is in the subgroup generated by \ref{item[3].even_relation}.
    Finally the relation
    $\smb{a_1} + \cdots + \smb{a_4} = \smb{b_1} + \cdots + \smb{b_4}$
    is in $I$. \qedhere
  \end{itemize}
\end{proof}

\vspace{.1em}

\begin{rmk}
  Relation \ref{item[3].even_relation} alone does not suffice in general, see
  for instance \cref{prop[4].calcul_cyc4_cyc8} for the computation
  of $\GW^\sym(\cyc{4}) \iso \Z \oplus \cyc{4}$.
  In particular $\GW^\sym(\cyc{4})$ is not freely generated by $\smb{1}$
  and $\smb{3}$, in spite of the fact that any relation from
  \ref{item[3].even_relation} is trivial in this case.
\end{rmk}

\begin{rmk}\label{rmk[3].rel_hyp}
  Relation \ref{item[3].odd_relation} applied to the tuple $(1, a, -1, -1)$
  for some unit $a$ recovers the familiar relation
  \[
    \smb{1} + \smb{-1} = \smb{a} + \smb{-a}
  \]
  in $\GW^\sym(R)$ of a commutative local ring $R$.
\end{rmk}

\begin{rmk}
  The symmetric Witt ring $\W^\sym(R)$ of a commutative local ring $R$
  is obtained by quotienting $\GW^\sym(R)$ by the subgroup generated
  by the hyperbolic element $\smb{1} + \smb{-1}$;
  see \cite{KRW72}*{corollary 1.17} for instance.
  We deduce from this fact and from \cref{thm[3].presentation} an explicit
  presentation of $\W^\sym(R)$ as an abelian group when $R$ has residue
  field $\ff{2}$. Namely, the kernel of the surjective ring map
  \[
    \Z[R^\times / R^{\times 2}] \twoheadrightarrow \W^\sym(R)
  \]
  is additively generated by relations \ref{item[3].even_relation},
  \ref{item[3].odd_relation} and $\smb{1} + \smb{-1} = 0$.
\end{rmk}

  \section{Applications}

A simple application of \cref{thm[3].presentation} is the following
criterion, which we will leverage to compute the symmetric Grothendieck--Witt
groups of the rings $\cyc{2^n}$ and $\ff{2}[x] / (x^n)$.

Given a unit $a$ inside a commutative ring $R$, we note
$\pfister{a} \defeq \smb{1} - \smb{a} \in \GW^\sym(R)$.

\begin{prop}\label{prop[4].criterion}
  A surjective morphism $\phi\colon R \rightarrow S$
  between commutative local rings of residue field $\F$
  induces a surjective morphism
  \[
    \phi_*\colon \GW^\sym(R) \twoheadrightarrow \GW^\sym(S)
  \]
  whose kernel is additively generated by elements
  $\smb{a} \pfister{x}$ with units $a$ and $x$
  such that $\phi(x) = 1$.
  It is in particular an isomorphism when $\phi$ restricts
  to an isomorphism
  $R^\times / R^{\times 2} \iso S^\times / S^{\times 2}$.
\end{prop}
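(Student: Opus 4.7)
The plan is to combine the presentation of \cref{thm[3].presentation} applied to both $R$ and $S$ with a direct analysis of the induced map on free abelian groups $f\colon \Z[R^\times / R^{\times 2}] \rightarrow \Z[S^\times / S^{\times 2}]$. A first observation is that a unit of $S$ lifts to a unit of $R$, since any preimage under $\phi$ of a nonzero element of the residue field is invertible in $R$ because $R$ is local ; in particular the induced map on square classes is surjective, and so is $\phi_*$ because by \cref{thm[1].presentation} both Grothendieck--Witt rings are additively generated by the symbols $\smb{a}$. Dually, any non-invertible of $S$ lifts to a non-invertible of $R$, since $\ker \phi \subseteq \m$. The containment $\smb{a} \pfister{x} \in \ker \phi_*$ for $\phi(x) = 1$ is then immediate from $\phi_*(\smb{a}\pfister{x}) = \smb{\phi(a)} (\smb{1} - \smb{1}) = 0$.

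For the reverse inclusion, let $I_R$ and $I_S$ denote the kernels of $\Z[R^\times / R^{\times 2}] \twoheadrightarrow \GW^\sym(R)$ and $\Z[S^\times / S^{\times 2}] \twoheadrightarrow \GW^\sym(S)$ provided by \cref{thm[3].presentation}. I aim to show $f^{-1}(I_S) = I_R + \ker f$, so that $\ker \phi_*$ will identify with the image of $\ker f$ in $\GW^\sym(R)$. The key step is to lift each relation of types \ref{item[3].odd_relation} and \ref{item[3].even_relation} for $S$ to a relation of the same type for $R$. Lifting \ref{item[3].odd_relation} is painless : given lifts $\tilde{a}, \tilde{b}, \tilde{c}, \tilde{d} \in R^\times$ of $a, b, c, d$, the resulting $\tilde{u}, \tilde{v}, \tilde{w}$ are units in $R$ because their reductions modulo $\m$ coincide with the (invertible) residue-field images of $u, v, w$. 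The main obstacle is the constraint $ma + nb = 0$ featuring in \ref{item[3].even_relation}, which at first sight seems delicate to preserve under lifting. This turns out to be harmless : lift $a, b$ to units $\tilde{a}, \tilde{b}$ and $m$ to an arbitrary $\tilde{m} \in \m$, then define $\tilde{n} \defeq -\tilde{m} \tilde{a} / \tilde{b}$. This element automatically lies in $\m$, satisfies $\tilde{m} \tilde{a} + \tilde{n} \tilde{b} = 0$ by construction, and reduces to $n$ in $S$ precisely because of the original constraint.

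Finally I identify $\ker f$. As the kernel of a map between free abelian groups induced by a surjection of sets, it is generated by differences $\smb{a} - \smb{b}$ for $a, b \in R^\times$ whose images in $S$ lie in the same square class. Given such a pair, lifting a square root of $\phi(b/a) \in S^\times$ to $\tilde{y} \in R^\times$ and setting $x \defeq b/(a \tilde{y}^2)$ yields $\phi(x) = 1$ and $\smb{b} = \smb{a x}$ in $R^\times / R^{\times 2}$, so that each generator rewrites as $\smb{a} - \smb{a x}$, which projects in $\GW^\sym(R)$ to $\smb{a}\pfister{x}$. Combining with the lifting step gives the desired equality $f^{-1}(I_S) = I_R + \ker f$ and the asserted description of $\ker \phi_*$. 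The final assertion is then formal : if $\phi$ restricts to an isomorphism of square classes, then $\phi(x) = 1$ forces $x \in R^{\times 2}$, so $\pfister{x} = 0$ and $\ker \phi_* = 0$.
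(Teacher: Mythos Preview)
Your argument is correct and follows essentially the same route as the paper: reduce to lifting the generating relations of $I_S$ to $I_R$ (with the same trick $\tilde n \defeq -\tilde m \tilde a / \tilde b$ for relation~\ref{item[3].even_relation}), then identify the generators of $\ker f$ and rewrite them as $\smb{a}\pfister{x}$ with $\phi(x)=1$. The paper packages the reduction via the snake lemma instead of your direct equality $f^{-1}(I_S) = I_R + \ker f$, but the content is the same. One small omission: the proposition is stated for an arbitrary residue field $\F$, while \cref{thm[3].presentation} only applies when $\F \iso \ff{2}$; the paper notes that the remaining case is handled identically using the presentation from \cite{RS24}, and you should add the same caveat.
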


\begin{proof}
  Observe that $\phi$ is necessarily local because $R$ and $S$ share the same
  residue field. In particular the induced map
  $R^\times / R^{\times 2} \rightarrow S^\times / S^{\times 2}$ is surjective,
  and it follows from \cref{thm[1].presentation}
  that
  \[
    \phi_*\colon \GW^\sym(R) \rightarrow \GW^\sym(S)
  \]
  is surjective as well.
  From the surjectivity of $R^{\times 2} \rightarrow S^{\times 2}$,
  one sees that it suffices to prove that $\kera{\phi_*}$ is generated by those
  $\smb{a} \pfister{x}$ where $x$ is such that $\phi(x)$ is a square,
  or equivalently by $\smb{a} - \smb{b}$ with units $a$ and $b$ such that
  $\smb{\phi(a)} = \smb{\phi(b)}$.

  The snake lemma applied to the following commutative diagram
  \begin{center}
    \begin{tikzcd}[sep=.7cm]
      0
        \arrow[r] &
      I
        \arrow[r]
        \arrow[d, dashrightarrow, "q"] &
      \Z[R^\times / R^{\times 2}]
        \arrow[r]
        \arrow[d, twoheadrightarrow, "p_R"] &
      \Z[S^\times / S^{\times 2}]
        \arrow[d, twoheadrightarrow, "p_S"]
        \arrow[r] &
      0 \\
      0
        \arrow[r] &
      \kera{\phi_*}
        \arrow[r] &
      \GW^\sym(R)
        \arrow[r, "\phi_*"] &
      \GW^\sym(S)
        \arrow[r] &
      0
    \end{tikzcd}
  \end{center}
  yields an exact sequence
  \begin{center}
    \begin{tikzcd}[sep=.7cm]
      \kera{p_R}
        \arrow[r, "\smb{\phi}"] &
      \kera{p_S}
        \arrow[r] &
      \cokera{q}
        \arrow[r] &
      \cokera{p_R} \iso 0
    \end{tikzcd}
  \end{center}
  The statement thus reduces to the surjectivity of
  $\smb{\phi}\colon \kera{p_R} \rightarrow \kera{p_S}$.

  We treat the case $\F \iso \ff{2}$, the other case is
  similar using instead the presentation found in \cite{RS24}.
  By \cref{thm[3].presentation},
  $\kera{p_S}$ is generated as a subgroup of $\Z[S^\times / S^{\times 2}]$ by
  the following families of relations :
  \begin{enumerate}[label=(\arabic*)]
    \item\label{item[4].even_relation} for units $a$, $b$
    and $m$, $n$ non-invertible in $S$ such that $ma + nb = 0$ :
    \[
      \smb{a} + \smb{b} = \smb{a + n^2 b} + \smb{b + m^2 a}
    \]

    \item\label{item[4].odd_relation} for units $a$, $b$, $c$ and $d$ :
    \[
      \smb{a} + \smb{b} + \smb{c} + \smb{d} =
        \smb{u} + \smb{v} + \smb{w} + \smb{a b c d \cdot u v w}
    \]
    where $u$, $v$ and $w$ are explicit polynomials in $a$, $b$, $c$ and $d$.
  \end{enumerate}
  We want to show that these relations lift to $\kera{p_R}$. Let
  \[
    \smb{a} + \smb{b} = \smb{a + n^2 b} + \smb{b + m^2 a}
  \]
  be a relation from \ref{item[4].even_relation}.
  Since $\phi$ is both local and surjective, there exists units $a'$ and $b'$
  in $R$ as well as $m'$ non-invertible lifting the elements $a$, $b$ and $m$.
  But then $n' \defeq - m' a' / b'$ lifts $n$ and is non-invertible,
  and \cref{thm[3].presentation} implies that the relation
  \[
    \smb{a'} + \smb{b'} = \smb{a' + (n')^2 b'} + \smb{b' + (m')^2 a'}
  \]
  is in $\kera{p_R}$. This furnishes the required lift.
  
  The same argument shows
  that relations from \ref{item[4].odd_relation} lift to $\kera{p_R}$ as well.
\end{proof}

\vspace{\baselineskip}

\subsection{Cyclic groups}

\begin{prop}\label{prop[4].calcul_cyc4_cyc8}
  We have group isomorphisms
  \[
      \GW^\sym(\cyc{4}) \iso \Z \oplus \cyc{4}
      \quad\text{and}\quad
      \GW^\sym(\cyc{8}) \iso \Z \oplus \cyc{4} \oplus \cyc{2}
  \]
  of inverses respectively induced by the elements
  $(\smb{1}, \pfister{3})$ and
  $(\smb{1}, \pfister{3}, \pfister{5})$.

  In particular $\W^\sym(\cyc{4}) \iso \cyc{8}$
  and $\W^\sym(\cyc{8}) \iso \cyc{8} \oplus \cyc{2}$
  via respectively $\smb{1}$ and $(\smb{1}, \pfister{5})$.
\end{prop}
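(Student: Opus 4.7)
The plan is to apply \cref{thm[3].presentation} directly for $\cyc{4}$ and bootstrap to $\cyc{8}$ via \cref{prop[4].criterion} applied to the reduction $\phi \colon \cyc{8} \twoheadrightarrow \cyc{4}$; the Witt ring statements then follow by quotienting out the hyperbolic element $\smb{1} + \smb{-1}$. For $\cyc{4}$, I first observe that $\cyc{4}^\times / \cyc{4}^{\times 2} \iso \cyc{2}$ (since both units square to $1$), so $\Z[\cyc{4}^\times/\cyc{4}^{\times 2}]$ is free abelian on $\smb{1}$ and $\pfister{3}$. Relation \ref{item[3].even_relation} collapses to a tautology, because any non-invertible $m \in \cyc{4}$ satisfies $m^2 = 0$. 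For relation \ref{item[3].odd_relation} I would enumerate over $\{1, 3\}^4$: using $a^2 \equiv 1 \pmod{4}$ for every unit and the evident $\perm_4$-symmetry in $(a, b, c, d)$, the only nontrivial instance comes from the constant tuple $(1, 1, 1, 1)$ and reads $4\smb{1} = 4\smb{3}$, i.e.\ $4\pfister{3} = 0$. This yields $\GW^\sym(\cyc{4}) \iso \Z \oplus \cyc{4}$ with inverse $(n, k) \mapsto n\smb{1} + k\pfister{3}$.

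For $\cyc{8}$ I invoke \cref{prop[4].criterion} with $\phi$ to obtain a surjection $\phi_* \colon \GW^\sym(\cyc{8}) \twoheadrightarrow \GW^\sym(\cyc{4})$ whose kernel $K$ is additively generated by elements $\smb{a}\pfister{x}$ with $\phi(x) = 1$, i.e.\ $x \in \{1, 5\}$. Since $\pfister{1} = 0$, I am left with the four generators $\smb{a}\pfister{5}$ for $a \in \{1, 3, 5, 7\}$. The hyperbolic identity of \cref{rmk[3].rel_hyp} applied with $a = 3$ in $\cyc{8}$ gives $\smb{1} + \smb{7} = \smb{3} + \smb{5}$, hence $\smb{3}\pfister{5} = \smb{3} - \smb{7} = -\pfister{5}$, making $K$ cyclic with generator $\pfister{5}$. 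A single instance of relation \ref{item[3].odd_relation}, for example the tuple $(1, 1, 1, 3)$, yields $2\pfister{5} = 0$, so $K$ is a quotient of $\cyc{2}$. To rule out $K = 0$, I would compose the discriminant homomorphism $\det \colon \GW^\sym(\cyc{8}) \to \cyc{8}^\times$ with a character of $\cyc{8}^\times$ nontrivial on $5$: this detects $\pfister{5}$. Combining $\phi_*$ with this character yields the desired isomorphism $\GW^\sym(\cyc{8}) \iso \Z \oplus \cyc{4} \oplus \cyc{2}$.

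For the Witt rings I quotient by $\smb{1} + \smb{-1}$. In $\cyc{4}$, this element is $\smb{1} + \smb{3} = 2\smb{1} - \pfister{3}$, corresponding to $(2, -1) \in \Z \oplus \cyc{4}$; the resulting quotient is cyclic of order $8$, generated by $\smb{1}$. In $\cyc{8}$, the identity $\smb{1} + \smb{7} = \smb{3} + \smb{5}$ rearranges to $\pfister{7} = \pfister{3} + \pfister{5}$, so the hyperbolic element corresponds to $(2, -1, 1) \in \Z \oplus \cyc{4} \oplus \cyc{2}$, and killing it yields $\cyc{8} \oplus \cyc{2}$ generated by the classes of $\smb{1}$ and $\pfister{5}$. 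The main obstacle in this plan is the identification of $K$ in the $\cyc{8}$ case: \cref{prop[4].criterion} shrinks the task considerably compared to a naive enumeration of quadruples, but one still has to locate the right instance of relation \ref{item[3].odd_relation} to obtain $2\pfister{5} = 0$ and construct a character that distinguishes $\pfister{5}$ from zero.
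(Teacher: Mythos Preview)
Your argument is correct, and for $\cyc{8}$ it takes a genuinely different route from the paper. The paper computes $\GW^\sym(\cyc{8})$ by direct enumeration: it lists eleven relations coming from \cref{thm[3].presentation}, then shows by substitution that all of them follow from the three relations $\smb{1}+\smb{7}=\smb{3}+\smb{5}$, $2\smb{1}=2\smb{5}$ and $4\smb{1}=4\smb{3}$. You instead bootstrap via \cref{prop[4].criterion}, which reduces the problem to understanding the kernel of $\phi_*$ and hence to a single cyclic group generated by $\pfister{5}$; this is more economical and illustrates exactly the kind of use \cref{prop[4].criterion} was designed for. The paper's enumeration has the mild advantage of making the full relation set visible (useful for the ring structure in \cref{rmk[4].anneau_gw_cyc4_cyc8}), while your approach scales better and avoids case-checking.

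Two small points. First, the ``evident $\perm_4$-symmetry'' you invoke for $\cyc{4}$ is not literally there: even after using $a^2\equiv 1$, the formulas $u=a+c+d$, $v=a+b+d$, $w=a+b+c$ single out $a$, so you should really just say the enumeration over $\{1,3\}^4$ is tiny and every non-constant tuple gives a tautology. Second, from \cref{rmk[3].rel_hyp} one gets $\smb{1}+\smb{7}=\smb{3}+\smb{5}$, which yields $\smb{3}\pfister{5}=\smb{3}-\smb{7}=\smb{1}-\smb{5}=+\pfister{5}$ rather than $-\pfister{5}$; the sign slip is harmless for your conclusion that $K$ is cyclic. Finally, when you combine $\phi_*$ with the discriminant character, you should take the character of $(\cyc{8})^\times$ trivial on $3$ and nontrivial on $5$ to recover precisely the basis $(\smb{1},\pfister{3},\pfister{5})$ of the statement; this choice exists since $3$ and $5$ generate $(\cyc{8})^\times\iso(\cyc{2})^2$.
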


\begin{proof}
  According to \cref{thm[3].presentation}, the ring $\GW^\sym(\cyc{4})$ is
  generated as a group by the symbols $\smb{1}$ and $\smb{3}$
  with the relation $4\smb{1} = 4\smb{3}$.

  Similarly, one can check that $\GW^\sym(\cyc{8})$
  is generated by the symbols $\smb{1}$, $\smb{3}$, $\smb{5}$ and $\smb{7}$
  with the following relations:
  \begin{multicols}{3}
    \begin{enumerate}[label=\alph*)]
      \setlength\itemsep{.3em}
      \item\label{item[4].cyc8_rel_a}
        $\smb{1} + \smb{3} = \smb{5} + \smb{7}$
      \item\label{item[4].cyc8_rel_b}
        $\smb{1} + \smb{7} = \smb{3} + \smb{5}$
      \item\label{item[4].cyc8_rel_c}
        $2\smb{1} = 2\smb{5}$
      \item\label{item[4].cyc8_rel_d}
        $2\smb{3} = 2\smb{7}$
      \item\label{item[4].cyc8_rel_e}
        $4\smb{1} = 4\smb{3}$
      \item\label{item[4].cyc8_rel_f}
        $4\smb{5} = 4\smb{7}$
      \item\label{item[4].cyc8_rel_g}
        $2(\smb{1} + \smb{5}) = 2(\smb{3} + \smb{7})$
      \item\label{item[4].cyc8_rel_h}
        $3\smb{1} + \smb{5} = \smb{3} + 3\smb{7}$
      \item\label{item[4].cyc8_rel_i}
        $\smb{1} + 3\smb{3} = \smb{5} + 3\smb{7}$
      \item\label{item[4].cyc8_rel_j}
        $\smb{1} + 3\smb{5} = 3\smb{3} + \smb{7}$
      \item\label{item[4].cyc8_rel_k}
        $3\smb{1} + \smb{3} = 3\smb{5} + \smb{7}$
    \end{enumerate}
  \end{multicols}
  \noindent By substituting the relation $\smb{7} = \smb{3} + \smb{5} - \smb{1}$
  deduced from \ref{item[4].cyc8_rel_b}, the relations
  \ref{item[4].cyc8_rel_a}, \ref{item[4].cyc8_rel_d},
  \ref{item[4].cyc8_rel_f}, \ref{item[4].cyc8_rel_g},
  \ref{item[4].cyc8_rel_h}, \ref{item[4].cyc8_rel_i},
  \ref{item[4].cyc8_rel_j} and \ref{item[4].cyc8_rel_k}
  are all seen to be implied by
  \ref{item[4].cyc8_rel_c} and \ref{item[4].cyc8_rel_e}.
  In particular $\GW^\sym(\cyc{8})$ is generated by $\smb{1}$, $\smb{3}$,
  $\smb{5}$ and $\smb{7}$ and relations \ref{item[4].cyc8_rel_b},
  \ref{item[4].cyc8_rel_c} and \ref{item[4].cyc8_rel_e}, or equivalently
  by $\smb{1}$, $\smb{3}$, $\smb{5}$ and relations \ref{item[4].cyc8_rel_c}
  and \ref{item[4].cyc8_rel_e}.

  The statement for symmetric Witt groups is then obtained by enforcing the
  relation $\smb{1} + \smb{-1} = 0$.
  In $\GW^\sym(\cyc{8})$, it is expressed as $\smb{3} + \smb{5} = 0$ thanks
  to \ref{item[4].cyc8_rel_b}.
\end{proof}

\begin{rmk}\label{rmk[4].anneau_gw_cyc4_cyc8}
  It is possible to make the ring structure on $\GW^\sym(\cyc{4})$
  and $\GW^\sym(\cyc{8})$ explicit using \cref{prop[4].calcul_cyc4_cyc8},
  since it suffices to compute the products of the exhibited generators.
  Explicitly, we obtain ring isomorphisms
  \[
    \GW^\sym(\cyc{4}) \iso \Z[x] / (4x, x^2 - 2x)
    \quad\text{and}\quad
    \GW^\sym(\cyc{8}) \iso \Z[x, y] / (4x, 2y, x^2 - 2x, y^2, xy)
  \]
  and
  \[
    \W^\sym(\cyc{4}) \iso \cyc{8}
    \quad\text{and}\quad
    \W^\sym(\cyc{8}) \iso (\cyc{8})[y] / (2y, y^2)
  \]
  where $x$ and $y$ are sent to $\pfister{3}$ and $\pfister{5}$ respectively.
  The projections $\GW^\sym(\cyc{4}) \twoheadrightarrow \W^\sym(\cyc{4})$
  and $\GW^\sym(\cyc{8}) \twoheadrightarrow \W^\sym(\cyc{8})$
  then correspond under these identifications to the evaluations at $x = 2$
  and $x = 2 - y$.
\end{rmk}

\begin{cor}\label{cor[4].tower_cyc2n}
  The canonical projections induce a tower of isomorphisms
  \begin{center}
    \begin{tikzcd}[sep=.6cm]
      \GW^\sym(\cyc{8}) &
      \GW^\sym(\cyc{16})
        \arrow[l, swap, "\iso"] &
      \cdots
        \arrow[l, swap, "\iso"] &
      \GW^\sym(\cyc{2^n})
        \arrow[l, swap, "\iso"] &
      \cdots
        \arrow[l, swap, "\iso"] &
      \GW^\sym(\Z_2)
        \arrow[l, swap, "\iso"]
    \end{tikzcd}
  \end{center}
  where $\Z_2$ is the ring of $2$-adic integers.
  In particular we have group isomorphisms
  \[
    \GW^\sym(\cyc{2^n}) \iso \GW^\sym(\Z_2) \iso
      \Z \oplus \cyc{4} \oplus \cyc{2}
    \quad\text{and}\quad
    \W^\sym(\cyc{2^n}) \iso \W^\sym(\Z_2) \iso
      \cyc{8} \oplus \cyc{2}
  \]
  for $n \geq 3$.
\end{cor}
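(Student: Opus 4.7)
The plan is to invoke \cref{prop[4].criterion} uniformly. Every ring appearing in the tower, namely $\cyc{2^n}$ for $n \geq 3$ and $\Z_2$, is local with residue field $\ff{2}$, and every arrow is induced by a surjective ring homomorphism. Hence the only content to verify is that the canonical projections $\cyc{2^{n+1}} \twoheadrightarrow \cyc{2^n}$ and $\Z_2 \twoheadrightarrow \cyc{2^n}$ restrict to isomorphisms on the groups $R^\times / R^{\times 2}$ for $n \geq 3$.

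The key input is the classical structure of the unit groups. For $n \geq 3$ one has $(\cyc{2^n})^\times \iso \cyc{2} \times \cyc{2^{n-2}}$ (generated for instance by $-1$ and $5$), and similarly $\Z_2^\times \iso \{\pm 1\} \times (1 + 4\Z_2) \iso \cyc{2} \times \Z_2$ via the $2$-adic logarithm. In every case, quotienting by squares yields a group of order $4$. Since each projection under consideration is surjective, the induced map on $R^\times / R^{\times 2}$ is a surjection between finite groups of the same order $4$, hence bijective. Applying \cref{prop[4].criterion} then simultaneously establishes all the isomorphisms in the tower.

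Transporting the computation $\GW^\sym(\cyc{8}) \iso \Z \oplus \cyc{4} \oplus \cyc{2}$ from \cref{prop[4].calcul_cyc4_cyc8} along this tower yields the announced group isomorphism $\GW^\sym(\cyc{2^n}) \iso \GW^\sym(\Z_2) \iso \Z \oplus \cyc{4} \oplus \cyc{2}$ for all $n \geq 3$. For the Witt-ring statement, I would use that $\W^\sym(R)$ is the quotient of $\GW^\sym(R)$ by the ideal generated by $\smb{1} + \smb{-1}$ and that every induced map $\phi_*$ in the tower sends this element to itself, so each isomorphism descends to the symmetric Witt rings. Combining with $\W^\sym(\cyc{8}) \iso \cyc{8} \oplus \cyc{2}$ from \cref{prop[4].calcul_cyc4_cyc8} then finishes the proof.

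No step of this argument looks genuinely difficult. The only slightly delicate point is the elementary structural fact about $(\cyc{2^n})^\times$, which also explains why the stabilization must begin precisely at $n = 3$: for $n = 2$ the group $(\cyc{4})^\times / (\cyc{4})^{\times 2}$ has order $2$ rather than $4$, so \cref{prop[4].criterion} cannot extend the tower further down to $\cyc{4}$, consistently with the fact that $\GW^\sym(\cyc{4}) \iso \Z \oplus \cyc{4}$ strictly differs from $\GW^\sym(\cyc{8})$.
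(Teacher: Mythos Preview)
Your proof is correct and follows essentially the same route as the paper: compute the square-class groups via the classical structure of $(\cyc{2^n})^\times$ and $\Z_2^\times$, observe that the surjective projections induce surjections between groups of order~$4$, hence isomorphisms, and then invoke \cref{prop[4].criterion} together with \cref{prop[4].calcul_cyc4_cyc8}. The only cosmetic differences are that the paper obtains $\Z_2^\times$ by passing to the limit of the $(\cyc{2^n})^\times$ rather than via the $2$-adic logarithm, and that your treatment of the Witt-ring statement and of the failure at $n=2$ is slightly more explicit than the paper's.
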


\begin{proof}
  For $n \geq 2$, recall that we have a group isomorphism
  \cite{IR90}*{theorem 2'}
  \[
    \begin{pmatrix}
      -1 \\ 5
    \end{pmatrix}\colon \cyc{2} \oplus \cyc{2^{n-2}} \iso
      (\cyc{2^n})^\times
  \]
  and thus
  $(\cyc{2^n})^\times / (\cyc{2^n})^{\times 2} \iso \cyc{2} \oplus \cyc{2}$ for all $n \geq 3$.

  Moreover, the functor $(-)^\times$ that assigns to a commutative ring its
  abelian group of units is right adjoint to the group algebra functor $\Z[-]$,
  and thus preserves all limits.
  In particular
  \begin{align*}
    \Z_2^\times &\iso \lim_{n \geq 2} (\cyc{2^n})^\times \\
      &\iso \cyc{2} \oplus \lim_n \cyc{2^n} \\
      &\iso \cyc{2} \oplus \Z_2
  \end{align*}
  Therefore $\Z_2^\times / \Z_2^{\times 2} \iso \cyc{2} \oplus \cyc{2}$.

  Because all projection maps are surjective and local, the maps induced
  on square classes are surjective as well.
  They therefore must be isomorphisms by the above,
  and \cref{prop[4].criterion} applies.
  The second assertion then follows from \cref{prop[4].calcul_cyc4_cyc8}.
\end{proof}

\vspace{.1em}

The above computations allow us to describe the kernel and
the cokernel of the symmetrisation map for the rings $\cyc{2^n}$
and $\Z_2$.
In the following, $\W^\qud(-)$ will denote the quadratic
Witt ring functor.

\begin{lem}\label{lem[4].complete_symmetrisation}
  If $R$ is a complete local ring of residue field $\ff{2}$, then
  the symmetrisation map
  \[
    \cyc{2} \iso \W^\qud(R) \rightarrow \W^\sym(R)
  \]
  is induced by the element $3\smb{1} - \smb{3}$.
\end{lem}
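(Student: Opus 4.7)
The plan is to exhibit a concrete quadratic form representing the generator of $\W^\qud(R) \iso \cyc{2}$, compute its polar bilinear form, and identify the resulting class in $\GW^\sym(R)$ using the presentation from \cref{thm[3].presentation}.

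Since $R$ is complete local with residue field $\ff{2}$, a standard Hensel-lifting argument identifies $\W^\qud(R) \iso \W^\qud(\ff{2}) \iso \cyc{2}$ through reduction modulo $\m$, and the Arf form $q(x, y) = x^2 + xy + y^2$ on $R^2$ lifts the non-trivial class. Its symmetrisation is the polar bilinear form $b_q(u, v) = q(u+v) - q(u) - q(v)$, whose Gram matrix in the canonical basis is $\begin{pmatrix} 2 & 1 \\ 1 & 2 \end{pmatrix}$.

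The central computation will be to diagonalize $b_q \perp \smb{1} \perp \smb{1}$ on $R^4$. I would use the ordered basis
\[
  v_1 = e_1 + e_3,\quad
  v_2 = e_2 - e_3,\quad
  v_3 = e_1 - e_2 - e_3,\quad
  v_4 = e_4,
\]
whose transition matrix has determinant $-3$ (a unit in $R$, since $-3 \equiv 1 \pmod{\m}$) and for which a direct check of the six pairings produces the diagonal Gram matrix $\diagMat(3, 3, 3, 1)$. Hence $[b_q] + 2\smb{1} = 3\smb{3} + \smb{1}$, so $[b_q] = 3\smb{3} - \smb{1}$ in $\GW^\sym(R)$. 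To match the stated form, I would then apply relation \ref{item[3].odd_relation} of \cref{thm[3].presentation} to the tuple $(a, b, c, d) = (1, 1, 1, 1)$: this yields $u = v = w = 3$ and $abcd \cdot uvw = 27$, with $\smb{27} = \smb{3}$ because $9 = 3^2$. The resulting relation $4\smb{1} = 4\smb{3}$ converts $3\smb{3} - \smb{1}$ into $3\smb{1} - \smb{3}$.

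The main obstacle is the first step: reducing $\W^\qud(R) \iso \W^\qud(\ff{2})$ with the Arf form representing the generator. Once this input is in hand, everything else reduces to routine linear algebra together with a single application of \cref{thm[3].presentation}; note also that the 2-torsion of $3\smb{1} - \smb{3}$ in $\W^\sym(R)$ required for the homomorphism $\cyc{2} \to \W^\sym(R)$ to be well-defined comes for free from its identification with the image of the 2-torsion class of the Arf form.
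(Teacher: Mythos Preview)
Your argument is correct and tracks the paper's proof closely through the identification of the generator of $\W^\qud(R)$ with the Arf form and the computation of its polar Gram matrix $\begin{pmatrix} 2 & 1 \\ 1 & 2 \end{pmatrix}$. The divergence is only in how that $2\times 2$ form is identified in $\GW^\sym(R)$. The paper invokes an external formula (\cite{RS24}*{proposition 3.1, (2)}) to write the class directly as $\smb{-3} + 2\smb{1} - \smb{-1}$ and then applies the hyperbolic relation $\smb{1}+\smb{-1}=\smb{3}+\smb{-3}$ (cf.\ \cref{rmk[3].rel_hyp}) to reach $3\smb{1}-\smb{3}$. You instead pad by $2\smb{1}$, diagonalise the resulting rank-$4$ form explicitly to $\diagMat(3,3,3,1)$, and then extract $4\smb{1}=4\smb{3}$ from relation \ref{item[3].odd_relation} with $a=b=c=d=1$. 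Your route is slightly longer but entirely self-contained within this paper, avoiding the outside citation; the paper's route is terser but relies on the black-box identity from \cite{RS24}. Either way the final step is the same arithmetic in $\Z[R^\times/R^{\times 2}]$ modulo the kernel.
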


\begin{proof}
  By \cite{Wal73}*{lemma 2.5}, the projection maps induce
  isomorphisms
  \[
    \W^\qud(\Z_2) \iso \W^\qud(\cyc{2^n}) \iso \W^\qud(\ff{2})
  \]
  and the Arf invariant gives an isomorphism $\cyc{2} \iso \W^\qud(\ff{2})$
  \cite{MH73}*{appendix 1}. More explicitly, it is induced by the element
  \[
    \alpha \defeq
      \smb[\Big]{
        \begin{pmatrix}
          1 & 1 \\
          0 & 1
        \end{pmatrix}
      }
  \]
  Here $\alpha$ is the unimodular quadratic form on $\ff{2}^2$ given
  by the formula $(x, y) \mapsto x^2 + y^2 + xy$.
  Lift $\alpha$ to the element $\beta \in \W^\qud(R)$ defined as
  \[
    \beta \defeq
      \smb[\Big]{
        \begin{pmatrix}
          1 & 1 \\
          0 & 1
        \end{pmatrix}
      }
  \]
  so that the identification $\cyc{2} \iso \W^\qud(R)$ is induced by $\beta$.
  The symmetrisation map $\sigma\colon \W^\qud(R) \rightarrow \W^\sym(R)$ then
  sends $\beta$ to
  \begin{align*}
    \sigma(\beta) &=
      \smb[\Big]{
        \begin{pmatrix}
          2 & 1 \\
          1 & 2
        \end{pmatrix}
      } \\
      &= \smb{-3} + 2\smb{1} - \smb{-1} \\
      &= 3\smb{1} - \smb{3}
  \end{align*}
  Here, the first step relies on \cite{RS24}*{proposition 3.1, (2)}
\end{proof}

\begin{cor}
  For $n \geq 2$, the symmetrisation map
  \[
    \W^\qud(\cyc{2^n}) \rightarrow \W^\sym(\cyc{2^n})
  \]
  is injective, and has cokernel
  \[
    \W^\sym(\cyc{2^n}) / \W^\qud(\cyc{2^n}) \iso
      \begin{cases} 
        \cyc{4} & \text{if } n = 2  \\
        \cyc{8} & \text{if } n \geq 3
      \end{cases}
  \]
  Similarly, the symmetrisation map
  $\W^\qud(\Z_2) \rightarrow \W^\sym(\Z_2)$
  also has cokernel $\cyc{8}$.
\end{cor}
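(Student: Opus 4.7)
The plan is to combine \cref{lem[4].complete_symmetrisation}, which identifies the symmetrisation map as being induced by the element $3\smb{1} - \smb{3}$, with the explicit ring descriptions of $\W^\sym(\cyc{2^n})$ collected in \cref{prop[4].calcul_cyc4_cyc8}, \cref{cor[4].tower_cyc2n} and \cref{rmk[4].anneau_gw_cyc4_cyc8}. Since $\W^\qud(\cyc{2^n}) \iso \W^\qud(\Z_2) \iso \cyc{2}$ (as recalled in the proof of \cref{lem[4].complete_symmetrisation}), the symmetrisation is completely determined by the image of this single generator, and the whole argument reduces to identifying that image inside the target Witt ring.

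For $n = 2$, we use $\W^\sym(\cyc{4}) \iso \cyc{8}$ from \cref{rmk[4].anneau_gw_cyc4_cyc8}, where $\smb{1} \mapsto 1$ and the evaluation $x \mapsto 2$ sends $\pfister{3}$ to $2$, so that $\smb{3} \mapsto -1$ and $3\smb{1} - \smb{3} \mapsto 4$. This image is nonzero, yielding injectivity, and generates the subgroup $\{0, 4\} \subset \cyc{8}$, so the cokernel is $\cyc{8}/\{0, 4\} \iso \cyc{4}$, as claimed.

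For $n \geq 3$ and for $\Z_2$, the isomorphism $\W^\sym \iso (\cyc{8})[y]/(2y, y^2)$ of \cref{rmk[4].anneau_gw_cyc4_cyc8}, combined with \cref{cor[4].tower_cyc2n} to handle $\Z_2$ and arbitrary $n \geq 3$, sends $\pfister{3}$ to $2 - y$ under $x \mapsto 2 - y$. Hence $\smb{3} \mapsto -1 + y$ and $3\smb{1} - \smb{3} \mapsto 4 - y$. Under the additive decomposition $\cyc{8} \oplus \cyc{2}$ with generators $\smb{1}$ and $\pfister{5} = y$, this image is the order-two element $(4, 1)$, which is again nonzero (so the map is injective). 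A direct inspection shows $(\cyc{8} \oplus \cyc{2})/\langle (4,1) \rangle \iso \cyc{8}$: the relation identifies $(0, 1)$ with $(4, 0)$, so the first-factor projection descends to an isomorphism.

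There is no genuine obstacle here, since every ingredient has already been set up in the preceding results and the argument is essentially bookkeeping around a single explicit element. The one mildly delicate point is to translate the ring presentation $(\cyc{8})[y]/(2y, y^2)$ into the additive decomposition $\cyc{8} \oplus \cyc{2}$ correctly, so that the quotient by the image of the symmetrisation can be read off as an abelian group; once that translation is in place the computation of the cokernel is immediate.
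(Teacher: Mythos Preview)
Your argument is correct and follows essentially the same route as the paper: reduce via \cref{cor[4].tower_cyc2n} to the two base cases, invoke \cref{lem[4].complete_symmetrisation} to identify the generator's image as $3\smb{1}-\smb{3}$, and then locate this element in the explicit Witt groups of \cref{prop[4].calcul_cyc4_cyc8}. The paper computes the image in $\W^\sym(\cyc{8})$ via the Witt relation $\smb{3}+\smb{5}=0$ to obtain $4\smb{1}-\pfister{5}=(4,1)$, and then pushes forward to $\cyc{4}$; you instead go through the ring presentations of \cref{rmk[4].anneau_gw_cyc4_cyc8} and the evaluations $x\mapsto 2$ and $x\mapsto 2-y$, landing on the same element $(4,1)$.

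One small wording issue: saying ``the first-factor projection descends to an isomorphism'' is not quite right, since $(4,1)$ maps to $4\neq 0$ under that projection. What you actually want is that the first-factor \emph{inclusion} $\cyc{8}\hookrightarrow\cyc{8}\oplus\cyc{2}$ becomes surjective modulo $(4,1)$ (because $(0,1)\equiv(4,0)$) while remaining injective (since $\langle(4,1)\rangle$ meets $\cyc{8}\oplus 0$ trivially); equivalently, pass to the basis $(1,0),(4,1)$. The conclusion is of course unaffected.
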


\begin{proof}
  It suffices to prove the result for $\cyc{4}$ and $\cyc{8}$
  according to \cref{cor[4].tower_cyc2n}.
  
  Coupling \cref{lem[4].complete_symmetrisation} with
  \cref{prop[4].calcul_cyc4_cyc8}, the image of the symmetrisation
  map
  \[
    \cyc{2} \iso \W^\qud(\cyc{8}) \rightarrow \W^\sym(\cyc{8})
  \]
  is induced by the element
  \begin{align*}
    3\smb{1} - \smb{3} &= 3\smb{1} + \smb{5} \\
      &= 4\smb{1} - \pfister{5}
  \end{align*}
  which corresponds to $(4, 1)$ under the identification
  $\W^\sym(\cyc{8}) \iso \cyc{8} \oplus \cyc{2}$.
  In particular the symmetrisation map is not trivial, thus
  injective, and a change of coordinates gives the announced cokernel.

  Mapping those computations along the projection $\cyc{8} \rightarrow \cyc{4}$
  shows that the symmetrisation map is induced by $4\smb{1}$ in
  $\W^\sym(\cyc{4}) \iso \cyc{8}$.
\end{proof}

\vspace{\baselineskip}

\subsection{Truncated polynomials}

In this section, we compute the symmetric Grothendieck--Witt rings
of $\ff{2}[x] / (x^n)$ for $n \geq 2$ using two methods.
One uses the presentation established in \cref{thm[3].presentation},
and serves as an illustration of \cref{prop[4].criterion}.
The other is more direct, and relies on the following :

\begin{prop}\label{prop[4].fund_power_series}
  The fundamental ideals of $\W^\sym(\ff{2}\llbracket x \rrbracket)$
  and $\W^\sym(\ff{2}(\!( x )\!))$ both square to zero.
\end{prop}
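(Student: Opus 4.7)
Since the fundamental ideal $I$ of either Witt ring is additively generated by the Pfister classes $\pfister{a} = \smb{1} - \smb{a}$, the square $I^2$ is additively generated by products $\pfister{a}\pfister{b}$; each such product is the Witt class of the 2-fold bilinear Pfister form with Gram matrix $\diagMat(1, a, b, ab)$. The plan is therefore to exhibit an explicit Lagrangian of this rank-4 form, for arbitrary units $a, b$ in $R \in \{\ff{2}\llbracket x \rrbracket, \ff{2}(\!(x)\!)\}$, which will prove it metabolic and hence $I^2 = 0$.

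The construction hinges on the decomposition $R = R^2 \oplus x R^2$, valid in both cases: it is the classical equality $[F : F^2] = 2$ for the field $F = \ff{2}(\!(x)\!)$, and for the power series ring it amounts to splitting a series into its even- and odd-degree parts. Writing $a = a_0^2 + x a_1^2$ and $b = b_0^2 + x b_1^2$, a short computation yields $ab = c_0^2 + x c_1^2$ with $c_0 = a_0 b_0 + x a_1 b_1$ and $c_1 = a_0 b_1 + a_1 b_0$. I would then take as candidate Lagrangian basis the two vectors
\[
  v_1 = (c_0, b_0, a_0, 1) \quad\text{and}\quad v_2 = (c_1, b_1, a_1, 0).
\]
Expanding the form in this decomposition gives, for any $v = (\alpha, \beta, \gamma, \delta)$, the identity
\[
  \scal{v}{v} = (\alpha + a_0 \beta + b_0 \gamma + c_0 \delta)^2 + x(a_1 \beta + b_1 \gamma + c_1 \delta)^2,
\]
from which the isotropy of $v_1$ and $v_2$ reduces to the vanishing of these two linear forms on each vector; the orthogonality $\scal{v_1}{v_2} = 0$ follows from a parallel expansion of the bilinear pairing. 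These verifications are routine in characteristic 2.

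The main obstacle I anticipate is showing that $v_1$ and $v_2$ span a rank-2 direct summand of $R^4$. A computation shows that the six $2\times 2$ minors of $(v_1, v_2)$ reduce up to units to $a_1$, $b_1$ and $c_1$, which simultaneously lie in $\m$ precisely when $a \equiv b \equiv 1 \pmod{\m^2}$. Over the field $F$ this is no obstruction---if $(a_1, b_1) \neq (0, 0)$ the vectors are linearly independent, and otherwise both $a, b$ are squares, making the form trivially metabolic---so the
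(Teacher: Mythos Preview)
Your explicit Lagrangian construction for the field $F=\ff{2}(\!(x)\!)$ is correct and complete: the identities check out in characteristic~$2$, and your case split on $(a_1,b_1)$ handles the degenerate situation cleanly. This is genuinely different from the paper's route, which simply invokes \cite{MH73}*{theorem III.5.10} for fields with $[F:F^2]=2$; your argument is more elementary and self-contained, at the cost of a direct computation. Both rest on the same structural fact, the decomposition $F=F^2\oplus xF^2$.

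Where your proposal stalls is over $R=\ff{2}\llbracket x\rrbracket$. The obstacle you flag is real and not just a matter of bookkeeping: take for instance $a=b=1+x^3$, so that $a_1=b_1=x\in\m$; then every $2\times 2$ minor of $(v_1\;v_2)$ lies in $\m$, the span is not a direct summand, and yet $\smb{a}$ is a nontrivial square class, so you cannot escape by declaring the form split. Nor can you fix this by changing representatives within the square class, since multiplying by a square never creates a nonzero $x$-coefficient. The clean way out is precisely what the paper does for this half of the statement: use that $R$ is a discrete valuation ring, so the map $\W^\sym(R)\to\W^\sym(F)$ is injective \cite{MH73}*{corollary IV.3.3}, and then your field argument finishes the job. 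Combining your explicit Lagrangian over $F$ with this injectivity step would yield a proof that is arguably more hands-on than the paper's, which outsources the field case entirely to a cited theorem.
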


\begin{proof}
  The ring of formal power series
  $\ff{2}\llbracket x \rrbracket \defiso \lim_n \ff{2}[x] / (x^n)$
  is a discrete valuation ring, and thus the canonical morphism
  $\W^\sym(\ff{2}\llbracket x \rrbracket)
    \rightarrow \W^\sym(\ff{2}(\!( x )\!))$
  is injective \cite{MH73}*{corollary IV.3.3}.
  It thus suffices to prove the statement for $\ff{2}(\!( x )\!)$.
  But this follows from \cite{MH73}*{theorem III.5.10},
  since the Frobenius $\ff{2}(\!( x )\!) \hookrightarrow \ff{2}(\!( x )\!)$
  is a quadratic field extension.
  Indeed, squares in $\ff{2}(\!( x )\!)$ are the even Laurent series.
\end{proof}

\begin{cor}\label{cor[4].pfister_power_series}
  All $2$-fold Pfister forms vanish in
  $\GW^\sym(\ff{2}\llbracket x \rrbracket)$ and $\GW^\sym(\ff{2}(\!( x )\!))$.
\end{cor}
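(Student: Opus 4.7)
The plan is to read the corollary as a consequence of \cref{prop[4].fund_power_series} via a rank-and-hyperbolicity argument. Writing $\pfister{a}\pfister{b} = \smb{1, ab} - \smb{a, b}$, this is a rank-$0$ element of $\GW^\sym(R)$; by the previous proposition, its image in $\W^\sym(R)$ lies in $I^2 = 0$, so $\pfister{a}\pfister{b}$ belongs to the hyperbolic ideal $H \subseteq \GW^\sym(R)$. It then suffices to show that the rank-$0$ part of $H$ vanishes in both rings of interest.

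For $R = \ff{2}(\!( x )\!)$, which is a field, I would invoke Witt cancellation for non-degenerate symmetric bilinear forms. The forms $\smb{1, ab}$ and $\smb{a, b}$ have the same rank, the same discriminant $ab$, and represent the same class in $\W^\sym(R)$ (their difference being $\pfister{a}\pfister{b} \in I^2 = 0$). They are therefore stably isometric, and by Witt cancellation over fields they are actually isometric; so $\pfister{a}\pfister{b} = 0$ already in $\GW^\sym(\ff{2}(\!( x )\!))$.

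For $R = \ff{2}\llbracket x \rrbracket$, the situation is more delicate: two rank-$2$ forms with matching discriminant need not be isometric (for instance $\smb{1, 1}$ and $\smb{1+x, 1+x}$ fail to be isometric over $\ff{2}\llbracket x \rrbracket$, since $1/(1+x)$ is not a square). Here I would appeal to the isomorphism $\GW^\sym(\ff{2}\llbracket x \rrbracket) \iso \lim_n \GW^\sym(\ff{2}[x]/(x^n))$ (\cref{cor[4].comparison_trunc}) together with the explicit structure $\GW^\sym(\ff{2}[x]/(x^n)) \iso \Z \oplus (\cyc{2})^k$ (\cref{cor[4].calcul_trunc}) in which the rank-$0$ summand is $2$-torsion. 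Since $H$ is generated as an ideal by $2\smb{1}$, multiplication by $2\smb{1}$ annihilates this $2$-torsion part, making the rank-$0$ part of $H$ trivial in each truncation and hence in the limit. The main obstacle is that this route anticipates results proved later in the paper; a more self-contained alternative would apply \cref{thm[3].presentation} directly to express $\pfister{a}\pfister{b}$ as a combination of the two families of relations, thereby manufacturing an explicit stable isometry between $\smb{1, ab}$ and $\smb{a, b}$.
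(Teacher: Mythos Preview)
Your opening moves are correct: the $2$-fold Pfister form $\pfister{u,v}$ has rank $0$, and by \cref{prop[4].fund_power_series} its Witt class is trivial, so it lies in the kernel $H$ of $\GW^\sym(R) \twoheadrightarrow \W^\sym(R)$. But you then work far too hard to finish, and for $\ff{2}\llbracket x \rrbracket$ your proposed route through \cref{cor[4].comparison_trunc} and \cref{cor[4].calcul_trunc} is genuinely circular: Proof~1 of \cref{cor[4].calcul_trunc} rests on \cref{rmk[4].pfister_sqz}, which in turn invokes the very corollary you are trying to prove.

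The missing observation, which the paper uses implicitly, is that for a commutative local ring the kernel $H$ is not merely the \emph{ideal} generated by $h \defeq \smb{1} + \smb{-1}$ but in fact the cyclic \emph{subgroup} $\Z h$; see \cite{KRW72}*{corollary 1.17}, already cited in the paper. (Concretely, $h \cdot \smb{a} = \smb{a} + \smb{-a} = h$ by \cref{rmk[3].rel_hyp}, so multiplying $h$ by any generator of $\GW^\sym(R)$ returns $h$.) Since $h$ has rank $2$, the rank map is injective on $H$, and any rank-$0$ element of $H$ is automatically zero. This handles both $\ff{2}\llbracket x \rrbracket$ and $\ff{2}(\!( x )\!)$ in one stroke, with no need for Witt cancellation, truncations, or explicit manipulations with the presentation. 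The paper's proof is accordingly just the single sentence ``$\pfister{u,v}$ has trivial rank and Witt class''.
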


\begin{proof}
  For $u$ and $v$ units in $\ff{2}\llbracket x \rrbracket$
  or $\ff{2}(\!( x )\!)$, the form
  \[
    \pfister{u, v} \defeq (\smb{1} - \smb{u})(1 - \smb{v})
  \]
  has trivial rank and Witt class by \cref{prop[4].fund_power_series}.
\end{proof}

\begin{rmk}\label{rmk[4].pfister_sqz}
  If $R$ is a commutative local ring of characteristic $2$
  for which $2$-fold Pfister forms vanish in $\GW^\sym(R)$,
  then using the relations
  \[
    \smb{a} + \smb{b} = \smb{1} + \smb{ab}
    \quad\text{and}\quad
    - \smb{a} = - 2\smb{1} + \smb{a}
  \]
  one can show that every element of $\GW^\sym(R)$ can be
  decomposed, necessarily uniquely, as $k\smb{1} + \pfister{a}$
  for some integer $k$ and some unit $a$.
  In other words, the ring $\GW^\sym(R)$ is a square zero extension
  \[
    \GW^\sym(R) \iso \Z \oplus R^\times / R^{\times 2}
  \]
  \Cref{cor[4].pfister_power_series} implies that
  this holds true for $\ff{2}(\!( x )\!)$, $\ff{2}\llbracket x \rrbracket$,
  as well as $\ff{2}[x]/(x^n)$ for $n \geq 1$.
\end{rmk}

\begin{lem}\label{lem[4].sq_trunc_exact_seq}
  For $n \geq 1$, let $R_n \defiso \ff{2}[x]/(x^n)$.
  The canonical projection $R_{n+1} \rightarrow R_n$
  sits inside an exact sequence
  \begin{center}
    \begin{tikzcd}[sep=.7cm]
      \cyc{2}
        \arrow[r] &
      R_{n+1}^\times / R_{n+1}^{\times 2}
        \arrow[r] &
      R_n^\times / R_n^{\times 2}
        \arrow[r] &
      0
    \end{tikzcd}
  \end{center}
  where the left map picks out the element $\smb{1+x^n}$.
\end{lem}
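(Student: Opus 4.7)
The plan is to verify the two required exactness properties directly from the structure of units in the truncated polynomial rings, exploiting the fact that the residue field $\ff{2}$ forces every unit to have constant term $1$.

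First I would establish surjectivity of the right arrow. Any unit in $R_n^\times$ has a polynomial representative $1 + a_1 x + \cdots + a_{n-1} x^{n-1}$ with $a_i \in \ff{2}$, which remains a unit when viewed as an element of $R_{n+1}$. Hence the projection is surjective on unit groups and \emph{a fortiori} on square classes. The element $1 + x^n$ clearly lies in the kernel of the right arrow, since $x^n \mapsto 0$ in $R_n$, which establishes one containment for exactness in the middle.

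For the reverse containment, I would take $u \in R_{n+1}^\times$ projecting to a square in $R_n$, lift a chosen square root to a unit $v \in R_{n+1}^\times$, and observe that $u - v^2$ lies in the ideal $(x^n) \subset R_{n+1}$. Since $x^{n+1} = 0$ in $R_{n+1}$, this ideal is the one-dimensional $\ff{2}$-line spanned by $x^n$, so $u = v^2 + \epsilon x^n$ for a unique $\epsilon \in \ff{2}$. If $\epsilon = 0$, then $\smb{u} = \smb{1}$; if $\epsilon = 1$, then $\smb{u} = \smb{1 + x^n v^{-2}}$, and the crucial step is the identity $x^n v^{-2} = x^n$ in $R_{n+1}$. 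This follows immediately from $v^{-2}$ having constant term $1$ together with $x^{n+1} = 0$ annihilating every higher-degree contribution; hence $\smb{u} = \smb{1 + x^n}$, as desired.

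There is no real obstacle here: the only substantive calculation is the identity $x^n v^{-2} = x^n$, which is one line. Note that the statement makes no injectivity claim on the left arrow, and this indeed can fail — when $n = 2k$ is even, $(1 + x^k)^2 = 1 + x^n$ is already a square in $R_{n+1}$, so the left arrow is the zero map in that case, while the sequence remains exact.
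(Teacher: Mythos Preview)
Your argument is correct and follows essentially the same approach as the paper: the paper normalises first by replacing $u$ with $u v^{-2}$ so that $\pi(u)=1$, whereas you compute $u=v^2+\epsilon x^n$ and factor out $v^2$ afterwards, but the substance is identical. Your closing remark on the parity of $n$ is also accurate and anticipates the next proposition in the paper.
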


\begin{proof}
  Let $\pi\colon R_{n+1} \rightarrow R_n$ denote the projection,
  and $\smb{u} \in R_{n+1}^\times / R_{n+1}^{\times 2}$ such that
  $\smb{\pi(u)} = \smb{1}$.
  Since $\pi$ is surjective on units, it is surjective on square of units
  as well, and therefore one can assume that $\pi(u) = 1$.
  Therefore $\smb{u}$ must be $\smb{1}$ or $\smb{1+x^n}$.
\end{proof}

\begin{prop}\label{prop[4].basis_sq_trunc}
  Let $n \geq 1$ and $k$ the greatest integer such that $2k+2 \leq n$.
  The family
  \[
    \smb{1+x}, \smb{1+x^3}, \dots, \smb{1+x^{2k+1}}
  \]
  forms a basis of the $\ff{2}$-module of square classes
  of $\ff{2}[x]/(x^n)$.
\end{prop}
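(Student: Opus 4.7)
The plan is to proceed by induction on $n$, using \cref{lem[4].sq_trunc_exact_seq} to control how the module of square classes changes under the projection $R_{n+1} \rightarrow R_n$. The base case $n = 1$ is vacuous: $R_1 = \ff{2}$ has trivial unit group, and no integer $k \geq 0$ satisfies $2k+2 \leq 1$, so the claimed family is empty.

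For the inductive step, suppose the statement holds for $R_n$ and consider the exact sequence
\[
  \ff{2} \rightarrow R_{n+1}^\times / R_{n+1}^{\times 2}
    \rightarrow R_n^\times / R_n^{\times 2} \rightarrow 0
\]
supplied by \cref{lem[4].sq_trunc_exact_seq}, whose leftmost map sends $1$ to $\smb{1+x^n}$. If $n = 2m$ is even, then $(1+x^m)^2 = 1 + x^{2m} = 1 + x^n$ in $R_{n+1}$, so the leftmost map is zero and the middle map is an isomorphism. Lifting the inductive basis of $R_n^\times / R_n^{\times 2}$ yields a basis of $R_{n+1}^\times / R_{n+1}^{\times 2}$, and the largest index $k$ is unchanged since the greatest $k$ with $2k+2 \leq n+1 = 2m+1$ is still $m-1$. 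If instead $n = 2m+1$ is odd, I claim $\smb{1+x^n}$ is nontrivial: every square in $R_{n+1}$ has only even-degree monomials because $(\sum c_i x^i)^2 = \sum c_i x^{2i}$ in characteristic $2$, while $1 + x^{2m+1}$ contains the odd-degree term $x^{2m+1}$. The leftmost map is therefore injective, producing a short exact sequence of $\ff{2}$-vector spaces. Appending $\smb{1+x^n}$ to a lift of the inductive basis of $R_n^\times / R_n^{\times 2}$ yields a basis of $R_{n+1}^\times / R_{n+1}^{\times 2}$, and since the greatest $k'$ with $2k'+2 \leq n+1 = 2m+2$ equals $m$ with $2k'+1 = n$, the augmented family matches the claimed one.

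The only substantive step is establishing the non-squareness of $1+x^n$ when $n$ is odd, which reduces immediately to the Frobenius computation in characteristic $2$. Everything else is bookkeeping combining the inductive hypothesis with the automatic splitting of a short exact sequence of elementary abelian $2$-groups.
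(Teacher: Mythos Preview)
Your proof is correct and follows essentially the same approach as the paper: induction on $n$ via the exact sequence of \cref{lem[4].sq_trunc_exact_seq}, splitting into the even case (where $1+x^n$ is a square and the projection is an isomorphism) and the odd case (where the sequence is short exact and one adjoins $\smb{1+x^n}$). The only cosmetic difference is that the paper exhibits an explicit splitting of the short exact sequence in the odd case, whereas you appeal to the automatic splitting of short exact sequences of $\ff{2}$-vector spaces; both are equally valid.
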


\begin{proof}
  For $n \geq 1$, let $R_n \defiso \ff{2}[x]/(x^n)$.
  The left map in the exact sequence provided by
  \cref{lem[4].sq_trunc_exact_seq}
  \begin{center}
    \begin{tikzcd}[sep=.7cm]
      \cyc{2}
        \arrow[r] &
      R_{n+1}^\times / R_{n+1}^{\times 2}
        \arrow[r] &
      R_n^\times / R_n^{\times 2}
        \arrow[r] &
      0
    \end{tikzcd}
  \end{center}
  is zero exactly when $1 + x^n$ is a square, or in other words exactly
  when $n$ is even.

  From this remark, the proof simply proceeds by induction on $n$:
  \begin{itemize}
    \item for $n = 1$, the statement follows from
    $\ff{2}^\times / \,\ff{2}^{\times 2} \iso 0$

    \item for $n$ even, the morphism
    \[
      R_{n+1}^\times / R_{n+1}^{\times 2}
        \rightarrow R_n^\times / R_n^{\times 2}
    \]
    is an isomorphism

    \item if the statement holds for $R_n^\times / R_n^{\times 2}$
    for some odd integer $n = 2k+1$, then the exact sequence
    \begin{center}
      \begin{tikzcd}[sep=.7cm]
        0
          \arrow[r] &
        \cyc{2}
          \arrow[r] &
        R_{n+1}^\times / R_{n+1}^{\times 2}
          \arrow[r] &
        R_n^\times / R_n^{\times 2}
          \arrow[r] &
        0
      \end{tikzcd}
    \end{center}
    is seen to be split via the explicit morphism
    \[
      R_n^\times / R_n^{\times 2} \rightarrow
        R_{n+1}^\times / R_{n+1}^{\times 2}
    \]
    sending each basis element $\smb{1 + x^{2\ell + 1}}$
    to $\smb{1 + x^{2\ell + 1}}$.
    \qedhere
  \end{itemize}
\end{proof}

\begin{rmk}
  The proof of \cref{prop[4].basis_sq_trunc} shows that one can in fact
  replace the generators $\smb{1 + x^{2\bullet + 1}}$ in the statement by
  $\smb{1 + x^{2\bullet + 1} u_\bullet}$ for any choice of sequence of
  units $u_\bullet$ in $\ff{2}\llbracket x \rrbracket$.
\end{rmk}

\begin{cor}\label{cor[4].calcul_trunc}
  The elements $\smb{1}, \pfister{1+x}, \pfister{1+x^3}, \dots$
  induce ring isomorphisms
  \[
    \GW^\sym\!\big(\ff{2}[x] / (x^{2n}) \big)
      \iso \GW^\sym\!\big(\ff{2}[x] / (x^{2n+1}) \big)
      \iso \Z \oplus (\cyc{2})^{n}
  \]
  where the right-hand side is a square zero extension of $\Z$.
\end{cor}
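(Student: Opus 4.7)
The plan is to combine two ingredients already in the paper: the square-zero decomposition of Remark \ref{rmk[4].pfister_sqz} and the basis of square classes from Proposition \ref{prop[4].basis_sq_trunc}. Writing $R_m \defeq \ff{2}[x]/(x^m)$, I would first verify the hypothesis of the remark, namely that every $2$-fold Pfister form vanishes in $\GW^\sym(R_m)$. The canonical projection $\ff{2}\llbracket x \rrbracket \twoheadrightarrow R_m$ is a surjective local morphism between local rings of residue field $\ff{2}$, hence surjective on units; consequently, any pair of units of $R_m$ lifts to units of $\ff{2}\llbracket x \rrbracket$, and functoriality of $\GW^\sym$ transports the vanishing of $\pfister{\tilde u, \tilde v}$ provided by Corollary \ref{cor[4].pfister_power_series} to the vanishing of $\pfister{u, v}$ in $\GW^\sym(R_m)$.

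With this in place, Remark \ref{rmk[4].pfister_sqz} delivers a ring isomorphism
\[
  \GW^\sym(R_m) \iso \Z \oplus R_m^\times / R_m^{\times 2}
\]
presenting $\GW^\sym(R_m)$ as a square-zero extension of $\Z$, with $\smb{1}$ generating the $\Z$ factor and the ideal summand given by $\smb{a} \mapsto \pfister{a}$.

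It remains to compute $R_m^\times / R_m^{\times 2}$ for $m = 2n$ and $m = 2n+1$. In both cases, the greatest integer $k$ satisfying $2k+2 \leq m$ equals $n-1$, so Proposition \ref{prop[4].basis_sq_trunc} exhibits the same basis $\smb{1+x}, \smb{1+x^3}, \dots, \smb{1+x^{2n-1}}$ of an $n$-dimensional $\ff{2}$-vector space. Under the isomorphism of the previous paragraph, these classes correspond to the $n$ Pfister elements $\pfister{1+x^{2k+1}}$ for $0 \leq k \leq n-1$, each generating a copy of $\cyc{2}$. This produces the announced ring isomorphisms
\[
  \GW^\sym(R_{2n}) \iso \GW^\sym(R_{2n+1}) \iso \Z \oplus (\cyc{2})^n
\]
with the prescribed generators, and the square-zero structure is inherited from Remark \ref{rmk[4].pfister_sqz}. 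The only nontrivial step is the Pfister vanishing on $R_m$, which is an easy functorial transfer from Corollary \ref{cor[4].pfister_power_series}; everything else is direct assembly.
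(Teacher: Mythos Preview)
Your argument is correct and matches the paper's first proof: you combine Remark~\ref{rmk[4].pfister_sqz} with Proposition~\ref{prop[4].basis_sq_trunc}, and your functoriality step for transporting Pfister vanishing from $\ff{2}\llbracket x \rrbracket$ to $R_m$ merely spells out what the last sentence of Remark~\ref{rmk[4].pfister_sqz} already asserts. The paper also offers a second, self-contained proof that bypasses Corollary~\ref{cor[4].pfister_power_series} by establishing the relation $\pfister{u,1+x^n}=0$ directly from the presentation of Theorem~\ref{thm[3].presentation}.
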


\begin{proof}[Proof 1]
  This follows immediately from \cref{rmk[4].pfister_sqz}
  and \cref{prop[4].basis_sq_trunc}.
\end{proof}

\begin{proof}[Proof 2]
  Let us sketch how one can obtain such a result without knowing
  \cref{prop[4].fund_power_series}. Similarly to the proof of
  \cref{prop[4].basis_sq_trunc}, it will suffice to establish that
  the morphism induced by canonical projection
  $\pi \colon \ff{2}[x] / (x^{n+1}) \rightarrow \ff{2}[x] / (x^n)$
  sits inside an exact sequence
  \begin{center}
    \begin{tikzcd}[sep=.7cm]
      \cyc{2}
        \arrow[r] &
      \GW^\sym\!\big(\ff{2}[x] / (x^{n+1}) \big)
        \arrow[r, "\pi_*"] &
      \GW^\sym\!\big(\ff{2}[x] / (x^n) \big)
        \arrow[r] &
      0
    \end{tikzcd}
  \end{center}
  for $n \geq 1$, where the left map picks out the $2$-torsion
  element $\pfister{1+x^n}$.

  According to \cref{prop[4].criterion}, the kernel of $\pi_*$
  is generated by the elements $\smb{u} \pfister{x}$ such that $\pi(x) = 1$,
  or in other words by the elements $\smb{u} \pfister{1+x^n}$.
  Since \cref{rmk[3].rel_hyp} already implies that $\pfister{1+x^n}$ is
  $2$-torsion, proving that
  \[
    \smb{u} \pfister{1+x^n} = \pfister{1+x^n}
    \quad\text{or equivalently}\quad
    \pfister{u, 1+x^n} = 0
  \]
  for all units $u$ will finish the proof.
  This is of course true by \cref{rmk[4].pfister_sqz}, but there is a more
  direct argument.
  First, observe that this is clear when $n$ is even or
  when $n = 1$.

  Assume now that $n = 2k+1$ is odd with $k \geq 1$.
  The units $u$ satisfying the equation are evidently stable under product,
  and if $u$ is of the form $1 + tx^2$ for some polynomial $t$, note that
  \[
    u = (1 + x )\Big(1 + x + \frac{1+t}{1+x} x^2 \Big)
  \]
  It is therefore sufficient to deal with the case
  where $u$ can be written $1 + x + tx^2$ for some polynomial $t$.
  Under this assumption, substituting
  $(1, u (1+x^{2k+1}), x^k)$ to $(a, b, n)$ in relation
  \ref{item[3].even_relation} from \cref{thm[3].presentation} yields :
  \begin{align*}
    \smb{1} + \smb{u (1+x^n)}
      &= \smb{1 + u x^{2k} (1 + x^{2k+1})}
        \big( \smb{1} + \smb{u (1+x^n)} \big) \\
      &= \smb{1 + x^{2k} + x^{2k+1}} \big( \smb{1} + \smb{u (1+x^n)} \big) \\
      &= \smb{(1+x^k)^2 (1 + x^{2k} + x^{2k+1})}
        \big( \smb{1} + \smb{u (1+x^n)} \big) \\
      &= \smb{1 + x^n} \big( \smb{1} + \smb{u (1+x^n)} \big) \\
      &= \smb{1 + x^n} + \smb{u}
  \end{align*}
  This is exactly the relation we sought.
\end{proof}

\begin{prop}\label{prop[4].comparison_sq_trunc}
  Let $R \defiso \ff{2}\llbracket x \rrbracket$
  and $R_n \defiso \ff{2}[x]/(x^n)$ for $n \geq 1$.
  The canonical comparison morphism
  \[
    R^\times / R^{\times 2}
      \rightarrow \lim R_n^\times / R_n^{\times 2}
  \]
  is an isomorphism.
\end{prop}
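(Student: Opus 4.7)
The plan is to show the comparison map is both injective and surjective using the explicit basis provided by \cref{prop[4].basis_sq_trunc}. The first step is to observe that the canonical projection $R_{n+1} \twoheadrightarrow R_n$ sends each basis element $\smb{1+x^{2\ell+1}}$ of $R_{n+1}^\times/R_{n+1}^{\times 2}$ to the same-named class in $R_n^\times/R_n^{\times 2}$, which is itself a basis element when $2\ell+1 < n$ and the trivial class when $2\ell+1 \geq n$. Consequently, the inverse limit $\lim_n R_n^\times/R_n^{\times 2}$ identifies with the product $\prod_{\ell \geq 0} \cyc{2}$, a sequence $(\epsilon_\ell)_{\ell \geq 0}$ corresponding to the compatible system whose component in $R_n^\times/R_n^{\times 2}$ is $\sum_{2\ell+1 < n} \epsilon_\ell \smb{1+x^{2\ell+1}}$.

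For surjectivity, I would exhibit an explicit preimage of a given sequence $(\epsilon_\ell)_\ell$ as the infinite product $u \defeq \prod_{\ell \geq 0} (1+x^{2\ell+1})^{\epsilon_\ell}$. This product converges in the $x$-adic topology of $R$ since the $\ell$-th factor is congruent to $1$ modulo $x^{2\ell+1}$, it defines a unit of $R$, and its image in each $R_n^\times/R_n^{\times 2}$ matches the prescribed component by construction.

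For injectivity, suppose $u = 1 + \sum_{i \geq 1} a_i x^i \in R^\times$ has trivial image in every $R_n^\times / R_n^{\times 2}$. Since in characteristic $2$ the Frobenius identity gives $(\sum_i b_i x^i)^2 = \sum_i b_i x^{2i}$, every square in $R$ has zero coefficient at every odd power of $x$; thus having $u \equiv v^2 \pmod{x^n}$ for some $v \in R^\times$ forces $a_i = 0$ for every odd $i < n$. Letting $n \rightarrow \infty$ shows that every odd coefficient of $u$ vanishes, and perfectness of $\ff{2}$ then gives $u = \big(1 + \sum_{j \geq 1} a_{2j} x^j\big)^2$, so $\smb{u}$ is trivial in $R^\times / R^{\times 2}$. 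The argument is largely formal once \cref{prop[4].basis_sq_trunc} is in hand; the only mildly delicate step is the basis compatibility under projection, which is implicit in the proof of that proposition and presents no substantive obstacle.
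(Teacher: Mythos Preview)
Your proof is correct. The injectivity argument is exactly the paper's: a unit that becomes a square modulo every $x^n$ has only even coefficients and is therefore a global square by perfectness of $\ff{2}$.

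For surjectivity you take a different route from the paper. You first identify $\lim_n R_n^\times / R_n^{\times 2}$ with $\prod_{\ell \geq 0} \cyc{2}$ via the basis of \cref{prop[4].basis_sq_trunc}, and then produce an explicit preimage of any sequence by the convergent product $\prod_\ell (1+x^{2\ell+1})^{\epsilon_\ell}$; this is essentially the content of \cref{rmk[4].sq_power_series}, which in the paper is deduced \emph{after} the proposition rather than used to prove it. The paper instead argues surjectivity in one line from the Mittag--Leffler condition: since the transition maps $R_{n+1}^{\times 2} \to R_n^{\times 2}$ are surjective, $\lim^1$ of the tower of squares vanishes, so $R^\times = \lim_n R_n^\times$ surjects onto $\lim_n R_n^\times / R_n^{\times 2}$. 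The paper's argument is shorter and does not depend on \cref{prop[4].basis_sq_trunc}, whereas yours is constructive and directly yields the explicit description of the limit.
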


\begin{proof}
  The surjectivity immediately follows from the fact that
  the induced maps $R_{n+1}^{\times 2} \rightarrow R_n^{\times 2}$
  are surjective.
  If $\smb{u}$ is in the kernel, then the image of $u$ under each truncation
  $R \rightarrow R_n$ is a square and in particular even. This implies in
  turn that $u$ only has even coefficients and is therefore a square in $R$.
  The comparison morphism is thus injective.
\end{proof}

\begin{rmk}\label{rmk[4].sq_power_series}
  The identification
  \[
    {\ff{2}\llbracket x \rrbracket}^\times / \,
      {\ff{2}\llbracket x \rrbracket}^{\times 2}
      \iso (\cyc{2})^\N
  \]
  resulting from \cref{prop[4].basis_sq_trunc} and
  \cref{prop[4].comparison_sq_trunc} is explicitly
  given by the formula
  \[
    a_\bullet \mapsto
      \smb[\Big]{\, \prod_{n=0}^\infty (1 + x^{2n + 1})^{a_n} }
  \]
  Indeed, it suffices to check that this formula holds at the level
  of the rings $\ff{2}[x] / (x^n)$.
\end{rmk}

\begin{cor}\label{cor[4].comparison_trunc}
  The canonical comparison morphism
  \[
    \GW^\sym(\ff{2}\llbracket x \rrbracket)
      \rightarrow \lim_n \GW^\sym\!\big(\ff{2}[x] / (x^n) \big)
  \]
  is an isomorphism.
\end{cor}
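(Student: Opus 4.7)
The plan is to exploit the square zero extension structure provided by \cref{rmk[4].pfister_sqz}. By \cref{cor[4].pfister_power_series}, all $2$-fold Pfister forms vanish in $\GW^\sym(\ff{2}\llbracket x \rrbracket)$; since $\ff{2}\llbracket x \rrbracket$ is a local ring of characteristic $2$, \cref{rmk[4].pfister_sqz} gives an identification
\[
  \GW^\sym(\ff{2}\llbracket x \rrbracket) \iso
    \Z \oplus \ff{2}\llbracket x \rrbracket^\times /
      \ff{2}\llbracket x \rrbracket^{\times 2}
\]
exhibiting $\GW^\sym(\ff{2}\llbracket x \rrbracket)$ as a square zero extension of $\Z$ by the group of square classes, where $(k, \smb{a})$ corresponds to $k\smb{1} + \pfister{a}$. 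The same remark, together with \cref{cor[4].calcul_trunc}, exhibits each $\GW^\sym(\ff{2}[x]/(x^n))$ as a square zero extension $\Z \oplus R_n^\times / R_n^{\times 2}$, writing $R_n \defiso \ff{2}[x]/(x^n)$.

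Next I would observe that these decompositions are natural in the ring: any morphism $\phi\colon R \rightarrow S$ between two such rings preserves ranks and sends $\pfister{a}$ to $\pfister{\phi(a)}$. In particular, applying this to all the projection maps and then passing to the inverse limit yields a canonical identification
\[
  \lim_n \GW^\sym(R_n) \iso
    \Z \oplus \lim_n R_n^\times / R_n^{\times 2}
\]
since limits commute with finite products and the rank pro-system is constant. Under this identification and the previous one, the comparison morphism of the statement splits as the identity on the first summand and as the canonical square class comparison on the second.

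The conclusion then follows immediately from \cref{prop[4].comparison_sq_trunc}, which states precisely that the square class comparison is an isomorphism. There is no serious obstacle: the only point requiring care is the compatibility of the two square zero decompositions with the transition maps, which is essentially formal given the results already established in this section.
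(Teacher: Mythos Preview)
Your proposal is correct and follows exactly the intended argument: the paper states this result as a corollary with no explicit proof, leaving it as an immediate consequence of \cref{rmk[4].pfister_sqz}, \cref{cor[4].calcul_trunc}, and \cref{prop[4].comparison_sq_trunc}, which is precisely the chain of reductions you spell out.
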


\begin{cor}\label{cor[4].calcul_power_series}
  The sequences $\smb{1}, \pfister{1 + x^{2\bullet + 1}}$
  and $\smb{1}, \pfister{x}, \pfister{1 + x^{2\bullet + 1}}$
  respectively induce canonical ring identifications
  \[
    \GW^\sym(\ff{2}\llbracket x \rrbracket) \iso \Z \oplus (\cyc{2})^\N
    \quad\text{and}\quad
    \GW^\sym(\ff{2}(\!( x )\!))
      \iso \Z \oplus \big(\, \cyc{2} \oplus (\cyc{2})^\N \,\big)
  \]
  of square zero extensions.
\end{cor}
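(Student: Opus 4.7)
The plan is to assemble the structural results already established upstream. The vanishing of all $2$-fold Pfister forms in both $\GW^\sym(\ff{2}\llbracket x \rrbracket)$ and $\GW^\sym(\ff{2}(\!( x )\!))$ proved in \cref{cor[4].pfister_power_series}, combined with \cref{rmk[4].pfister_sqz}, immediately identifies both Grothendieck--Witt rings as canonical square zero extensions
\[
  \GW^\sym(R) \iso \Z \oplus R^\times / R^{\times 2}
\]
in which each square class $\smb{a}$ on the right corresponds to the element $\pfister{a} = \smb{1} - \smb{a}$ on the left. The remaining task is therefore to describe the square class groups of $\ff{2}\llbracket x \rrbracket$ and $\ff{2}(\!( x )\!)$ in terms of the prescribed generators.

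For the formal power series ring, the identification ${\ff{2}\llbracket x \rrbracket}^\times / \, {\ff{2}\llbracket x \rrbracket}^{\times 2} \iso (\cyc{2})^\N$ with distinguished basis the $\smb{1 + x^{2n + 1}}$ is exactly the content of \cref{rmk[4].sq_power_series}. Transporting this through the square zero decomposition yields the first ring identification, in which the Pfister forms $\pfister{1 + x^{2\bullet + 1}}$ furnish the claimed basis of the square zero ideal.

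For the Laurent series ring, I would exploit the $x$-adic valuation on $\ff{2}(\!( x )\!)$. Every non-zero Laurent series factors uniquely as $x^k u$ with $k \in \Z$ and $u \in {\ff{2}\llbracket x \rrbracket}^\times$, and such an element is a square in $\ff{2}(\!( x )\!)$ if and only if $k$ is even and $u$ is a square in $\ff{2}\llbracket x \rrbracket$. This yields a short exact sequence
\[
  0 \rightarrow {\ff{2}\llbracket x \rrbracket}^\times / \, {\ff{2}\llbracket x \rrbracket}^{\times 2} \rightarrow \ff{2}(\!( x )\!)^\times / \, \ff{2}(\!( x )\!)^{\times 2} \rightarrow \cyc{2} \rightarrow 0
\]
split by the class of $x$. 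Combined with the power series computation, this gives the second identification, the additional $\cyc{2}$ factor being generated by $\pfister{x}$. There is no real obstacle in the argument: essentially all of the work has been done upstream through \cref{cor[4].pfister_power_series} and the accompanying remark on square-zero extensions, and the only remaining computation is the elementary splitting of the square classes of $\ff{2}(\!( x )\!)$ via the $x$-adic valuation.
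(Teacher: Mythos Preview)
Your proposal is correct and follows essentially the same approach as the paper: both reduce to computing square class groups via \cref{rmk[4].pfister_sqz}, invoke \cref{rmk[4].sq_power_series} for $\ff{2}\llbracket x \rrbracket$, and split $\ff{2}(\!( x )\!)^\times / \ff{2}(\!( x )\!)^{\times 2}$ using the $x$-adic valuation. The only cosmetic difference is that the paper writes the split exact sequence with $\cyc{2}$ on the left (injecting via $\smb{x}$) rather than on the right (surjecting via valuation parity), which describes the same direct sum decomposition.
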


\begin{proof}
  Let $R \defiso \ff{2}\llbracket x \rrbracket$ and
  $F \defiso \ff{2}(\!( x )\!)$, so that we have
  a short exact sequence
  \begin{center}
    \begin{tikzcd}[sep=.7cm]
      0
        \arrow[r] &
      \cyc{2}
        \arrow[r, "\smb{x}"] &
      F^\times / F^{\times 2}
        \arrow[r] &
      R^\times / R^{\times 2}
        \arrow[r] &
      0
    \end{tikzcd}
  \end{center}
  split by the map $R^\times / R^{\times 2} \rightarrow F^\times / F^{\times 2}$
  induced by the inclusion.
  We conclude using \cref{rmk[4].pfister_sqz,rmk[4].sq_power_series}.
\end{proof}


  \begin{bibdiv}
\begin{biblist}*{labels={alphabetic}}
  \bib{IR90}{book}{
    edition = {2},
    series = {Graduate {Texts} in {Mathematics}},
    title = {A classical introduction to modern number theory},
    volume = {84},
    isbn = {978-1-4419-3094-1 978-1-4757-2103-4},
    publisher = {Springer},
    address = {New York},
    author = {Ireland, Kenneth},
    author = {Rosen, Michael},
    year = {1990}
  }

  \bib{KRW72}{article}{
    title = {Structure of {Witt} rings and quotients of abelian group rings},
    volume = {94},
    issn = {00029327},
    number = {1},
    journal = {American Journal of Mathematics},
    author = {Knebusch, Manfred},
    author = {Rosenberg, Alex},
    author = {Ware, Roger},
    year = {1972},
    pages = {119},
  }

  \bib{Lam05}{book}{
    address = {Providence, Rhode Island},
    series = {Graduate {Studies} in {Mathematics}},
    title = {Introduction to quadratic forms over fields},
    isbn = {978-0-8218-1095-8 978-1-4704-2108-3},
    volume = {67},
    publisher = {American Mathematical Society},
    author = {Lam, Tsit-Yuen},
    year = {2005}
  }

  \bib{MH73}{book}{
    address = {Berlin, Heidelberg},
    title = {Symmetric bilinear forms},
    isbn = {978-3-642-88332-3 978-3-642-88330-9},
    publisher = {Springer},
    author = {Milnor, John},
    author = {Husemoller, Dale},
    year = {1973}
  }

  \bib{RS24}{article}{
    title = {On the presentation of the {Grothendieck}-–{Witt} group of symmetric bilinear forms over local rings},
    volume = {307},
    issn = {0025-5874, 1432-1823},
    number = {2},
    journal = {Mathematische Zeitschrift},
    author = {Rogers, Robert},
    author = {Schlichting, Marco},
    year = {2024},
    pages = {41},
  }

  \bib{Wal73}{article}{
    title = {On the classification of {Hermitian} forms {III}: {Complete} semilocal rings},
    volume = {19},
    number = {1},
    journal = {Inventiones Mathematicae},
    author = {Wall, Charles Terence Clegg},
    year = {1973},
    pages = {59--71}
  }

\end{biblist}
\end{bibdiv}

\end{document}